\newtheorem{definition}{Definition}
\newtheorem{propo}{Proposition}
\newtheorem{remark}{Remark}
\newtheorem{lemma}{Lemma}
\newtheorem{theorem}{Theorem}
\newtheorem*{theorem*}{Theorem}
\numberwithin{equation}{section}
\newcommand{\vertiii}[1]{{\left\vert\kern-0.25ex\left\vert\kern-0.25ex\left\vert #1 
    \right\vert\kern-0.25ex\right\vert\kern-0.25ex\right\vert}}
\def\N{\mathbb{N}}
\def\R{\mathbb{R}}
\def\d{\mathrm{d}}
\def\B{\mathbb{B}}
\def\P{\mathbb{P}}
\def\ssi{\Longleftrightarrow}
\def\eps{\varepsilon}
\title{\textbf{Heat kernel estimates for stable-driven SDEs with distributional drift}}
\author{
Mathis Fitoussi\footnotemark[1]}
\date{\today}
\begin{document}

\maketitle
\footnotetext[1]{Universit\'e Paris-Saclay, Laboratoire de Math\'ematiques et Mod\'elisation d’\'Evry (LaMME), 23 boulevard de France, 91 037 \'Evry, France\\ \textit{Email address:} mathis dot fitoussi at universite-paris-saclay dot fr}

\begin{center}
\textbf{Abstract}
\end{center}
\begin{adjustwidth}{0.8in}{0.8in}
We consider the \textit{formal} SDE
\begin{equation}\label{sde-abstract}\tag{E}
	\d X_t = b(t,X_t)\d t + \d Z_t, \qquad X_0 = x \in \R^d,
\end{equation}
where $b\in L^r ([0,T],\B_{p,q}^\beta (\R^d,\R^d))$ is a time-inhomogeneous Besov drift and $Z_t$ is a symmetric $d$-dimensional $\alpha$-stable process, $\alpha \in (1,2)$, whose spectral measure is absolutely continuous w.r.t. the Lebesgue measure on the sphere. Above, $L^r$ and $\B_{p,q}^\beta$ respectively denote Lebesgue and Besov spaces. We show that, when $\beta > \frac{1-\alpha + \frac{\alpha}{r} + \frac{d}{p}}{2}$, the martingale solution associated with the formal generator of \eqref{sde-abstract} admits a density which enjoys two-sided heat kernel bounds as well as gradient estimates w.r.t. the backward variable. Our proof relies on a suitable mollification of the singular drift aimed at using a Duhamel-type expansion. We then use a normalization method combined with Besov space properties (thermic characterization, duality and product rules) to derive estimates.
\end{adjustwidth}

\begin{paragraph}{Keywords} Heat kernel estimates, singular drift diffusions, stable SDEs.
\end{paragraph}
\begin{paragraph}{Classification (MSC 2020)} 35K08 (Heat Kernel), 60H10 (Stochastic Ordinary Differential Equations), 60G52 (Stable Stochastic Processes), 60H50 (Regularization by Noise)
\end{paragraph}
%\begin{paragraph}{Statements and Declarations} The author has no competing interests to declare that are relevant to the content of this article. Data sharing not applicable to this article as no datasets were generated or analysed during the current study.
%\end{paragraph}

\section{Introduction and notations}\label{sec-intro}
Throughout this paper, the dimension of the problem $d$ and $\alpha \in (1,2)$ are fixed.
\subsection{Scope of this paper}\label{subsec-scope}
For a fixed $T>0$, we study the \textit{formal} SDE
\begin{equation}\label{sde}
\d X_t = b(t,X_t)\d t + \d Z_t, \qquad X_0 = x, \qquad \forall t \in [0,T],
\end{equation}
where $b\in L^r ([0,T],\B_{p,q}^\beta (\R^d,\R^d))= \left\{ f :[0,T] \times \R^d : \left\Vert t\mapsto \Vert f(t,\cdot) \Vert_{\B_{p,q}^\beta} \right\Vert_{L^r([0,T])}  < \infty \right\}$ (see Subsection \ref{subsec-besov} for details on Besov spaces) and $Z_t$ is a symmetric non-degenerate $d$-dimensional $\alpha$-stable process, whose spectral measure is absolutely continuous w.r.t. the Lebesgue measure on $\mathbb{S}^{d-1}$ (see Subsection \ref{subsec-density-noise}  for detailed assumptions on the noise).\\

We call (\ref{sde}) \textit{``formal"} equation because $b$ can be a distribution when $\beta<0$, in which case (\ref{sde}) is ill-defined as such. As there are multiple ways to define a solution to (\ref{sde}), each with its conditions on the parameters $\beta,p,q,r$ and interpretation, we will go into details in Subsection \ref{subsec-def}. \\

The main idea behind the study of singular drift diffusions is that adding a noise regularizes ordinary differential equations, and helps restore existence and uniqueness in some appropriate sense. For example, in the case of a $\beta$-Hölder ($\beta\in(0,1)$) drift, the noise gives an ``impulse" which permits to exit singular spots (see e.g. \cite{DF14} in the Brownian case). Knowing that, one would expect that, the bigger the intensity of the noise, the stronger the regularizing effect, which we will see on the upcoming thresholds (see also \cite{CdRM22b} and \cite{MM21}). We will investigate cases in which the noise is strong enough to restore uniqueness even for distributional drifts.\\

Let us first review the probabilistic results and associated techniques used in the case $\beta\geq 0, \alpha \in (0,2)$ to derive weak or strong well-posedness, when the drift is a function. In order to establish well-posedness, a natural condition appeared in the seminal article by Tanaka \textit{et al.} \cite{TTW74} : $\beta + \alpha >1$. The authors consider therein the scalar case, and proved that strong uniqueness holds for bounded $\beta$-Hölder drifts under this condition, while giving a counter example when $\beta+\alpha <1$. The critical multidimensional case (i.e. $\alpha =1$) in a time-inhomogeneous setting was investigated in \cite{Kom84}, in which weak uniqueness is derived for a continuous drift with, again, the driving noise having absolutely continuous spectral measure w.r.t. the Lebesgue measure on the sphere. Having in mind that weak (or strong) well-posedness is often investigated through the corresponding parabolic PDE, recalling that the associated expected parabolic gain is $\beta+\alpha$, the  condition $\beta+\alpha >1$ coincides with the regularity required to define the gradient of the solution. The aforementioned regularity gain is often obtained through Schauder-type estimates. We can mention \cite{MP14} (bounded drift, stable-like generators), \cite{CdRMP20} (unbounded drift, general stable generators including e.g. the cylindrical one). These estimates naturally lead to weak uniqueness in the multidimensional setting for \eqref{sde} through the martingale problem, which precisely requires a control of the gradient of the solution of the PDE.\\

Going towards strong solutions requires additional constraints on the parameters. It was e.g. shown by Priola in \cite{Pri12} that pathwise uniqueness holds in the multidimensional case for general non-degenerate stable generators with $\alpha \geq 1$ for time-homogeneous bounded $\beta$-Hölder drifts under the assumption $\beta > 1- \alpha/2$. Under the same assumption, \cite{CZZ21} proved strong existence and uniqueness for any $\alpha \in (0,2)$, as well as weak uniqueness whenever $\beta + \alpha >1$ for time-inhomogeneous drift with non-trivial diffusion coefficient. Those results are usually obtained using the Zvonkin transform (see \cite{Zvo74}, \cite{Ver80}), which requires additional regularity on the underlying PDE, which again follow from Schauder-type estimates.\\

Once weak or strong well-posedness is established, a natural question concerns the behavior of the time marginal laws of the SDE. Such behavior is usually investigated through heat kernel estimates, which, in the stable setting, somehow forces to consider the stable-like case, i.e., the driving noise $Z$ in \eqref{sde} has a L\'evy measure with \textit{smooth} density w.r.t. to the isotropic $\alpha$-stable measure (see Subsection \ref{subsec-density-noise} for detailed assumptions on the noise). In this setting, we can refer to the seminal work by Kolokoltsov \cite{Kol00}, who addressed the subcritical case $\alpha > 1$ for smooth bounded drifts. This work was extended in various directions, although mostly for non-negative $\beta$ (see \cite{Kul19}, \cite{CHZ20}, \cite{KK18}).
% This work was extended in \cite{Kul19}, in which authors prove existence and obtain estimates for heat kernels associated with \eqref{sde} in the multidimensional setting using a parametrix extension of the density of the solution.
%  \footnote{aller voir  les travaux de Chen, ref dans MZ22, notamment Kulik. Chen-Hao-Zhang font des estim\'ees sur le semi groupe dans le cas cylindrique}
   In \cite{MZ22}, authors cover the whole range $\alpha \in (0,2)$ with Hölder unbounded drift. In those works, the authors establish that the time marginal laws of the process have a density which is ``equivalent" (i.e. bounded from above and below) to the density of the noise, and that the spatial gradients exhibit the same time singularities and decay rates (see Theorem \ref{thm-main} below in the current setting).\\

Going towards negative $\beta$ brings additional difficulties.  The first challenge is to specify what is intended with ``solution" to (\ref{sde}). To this end, a key tool is the following PDE:
\begin{equation}\label{pde}
	\left( \partial_t + b\cdot D + \mathcal{L}^\alpha \right)u(t,x)=f(t,x) \; \mathrm{on} \;[0,T)\times \R^d, \qquad u(T,\cdot) = g \; \mathrm{on} \; \R^d
\end{equation}
 for suitable sources $f$ and final conditions $g$, and where $\mathcal{L}^\alpha$ is the generator of the noise $Z$. When studying \eqref{pde}, defining the gradient of the solution still requires $\alpha + \beta>1$, which now imposes $\alpha >1$. This is anyhow not sufficient: we also need to be able to define $b\cdot Du$ as a distribution. Roughly, since $b$ has spatial regularity $\beta$, this imposes $\beta + (\beta + \alpha - 1 )> 0\ssi \beta > \frac{1-\alpha}{2}$ by usual paraproduct rules (note that this is the exact assumption we need if $p=r=+\infty$). This threshold already appears in \cite{BC01} in the diffusive setting ($\alpha=2$), where strong well-posedness is derived in the scalar case through Dirichlet forms techniques for specifically structured time-homogeneous drifts. The same threshold is exhibited in \cite{FIR17}, where the authors introduce the notion of \textit{virtual solutions} to give  a meaning to \eqref{sde}. Those solutions are defined through a Zvonkin-type transform formula, and, while not requiring any specific structure, do not yield a precise dynamics for the SDE. We can also refer to \cite{ZZ17} and \cite{ABM20}, who specified the meaning to be given to \eqref{sde}, in the sense that the drift therein is defined through smooth approximating sequences of the singular $b$ along the solution. Importantly, the limit drift is a Dirichlet process, highlighting once again that \eqref{sde} is a \textit{formal} equation. A thorough description of this Dirichlet process was done in the Brownian scalar case in \cite{DD16} and extended in \cite{CC18} for multidimensional SDEs. Assuming some additional structure on the drift, they manage to go beyond the above threshold and reach $\beta  > -\frac{2}{3}$ (still with $p=r=\infty$). This work was extended in the multidimensional strictly stable case, still assuming a specific structure for the drift in \cite{KP22}, in which weak well-posedness is proved for $\beta > \frac{2-2\alpha}{3}$. Without any structure on the drift, a similar and consistent description of the dynamics for the weak solutions of \eqref{sde} in the multidimensional setting is obtained in \cite{CdRM22} for $\beta> \frac{1-\alpha}{2}$. The case of a non-trivial diffusion coefficient was investigated in \cite{LZ22} with the same thresholds. Note that, in the present work, we chose to work with a trivial diffusion coefficient as the most delicate issue is the handling of the singular drift (see Remark 15 in \cite{CdRM22} for the handling of a non trivial diffusion coefficient in a Duhamel expansion). We do believe our approach would be robust enough to treat this case. Let us mention that \cite{ABM20} also obtained strong uniqueness with this threshold in the scalar case. We emphasize that most of the aforementioned results heavily rely on the Schauder-type regularization properties of the PDE \eqref{pde}.\\

In the scope of singular drift heat kernels estimates, the sole result we were able to gather is \cite{PvZ22}. Using the Littlewood-Paley characterization of Besov spaces, Perkowski and van Zuijlen managed to derive explicit two-sided heat kernel estimates as well as gradient estimates w.r.t. the backward variable for the solution in the Brownian, time-inhomogeneous setting with time-continuous drift in $\B^\beta_{\infty,1},\beta>-\frac{1}{2}$.\\

The goal of the current paper is to establish heat kernel and gradient estimates for stable driven SDEs with drifts in $L^r-\B_{p,q}^\beta$ and symmetric non-degenerate $d$-dimensional $\alpha$-stable noise with absolutely continuous L\' evy measure for $ \beta \in \left( \frac{1-\alpha+\frac{d}{p}+\frac{\alpha}{r}}{2} ,0\right)$. As compared to the previous results, this represents a slight modification of the threshold, due to integrability concerns. For $p=r=+\infty$, we work under the usual $\beta > \frac{1-\alpha}{2}$ assumption.\\

This paper is organized as follows. We first discuss the properties of the noise in Subsection \ref{subsec-density-noise}. We then  define the notions of martingale solutions for \eqref{sde} and mild solutions for \eqref{pde} along with required assumptions in Subsection \ref{subsec-def}. We state our main results in Subsection \ref{subsec-main-thm} and detail the dynamics of \eqref{sde} in \ref{subsec-weak-sols}. Section \ref{sec-lemmas} is a collection of technical lemmas specific to our paper and classical results in Besov spaces, which we use in the following Sections \ref{sec-estimates} and \ref{sec-approx}. Section \ref{sec-estimates} is dedicated to obtaining estimates on a mollified equation with smooth drift and Section \ref{sec-approx} links those estimates back to the main SDE (\ref{sde}) through compactness arguments. Section \ref{sec-proof} contains the proofs of all technical lemmas.

\subsection{Driving noise and related density properties}\label{subsec-density-noise}

%The aim of this paper is to provide two-sided estimates for the density of the solution to (\ref{sde}). In the scalar case, the estimates for the stable densities go back to Zolotarev in \cite{Zol86}. \\

Let us denote by $\mathcal{L}^\alpha$ the generator of the driving noise $Z$. In the case $\alpha = 2$, $\mathcal{L}^\alpha$ is the usual Laplacian $\frac{1}{2}\Delta$. When $\alpha \in (1,2)$, in whole generality, the generator of a symmetric stable process writes, $\forall \phi \in C_0^\infty (\R^d,\R)$ (smooth compactly supported functions),
\begin{align*}
\mathcal{L}^\alpha \phi (x)&= \mathrm{p.v.} \int_{\R^d} \left[ \phi(x+z) - \phi(x)\right]\nu(\d z)\\
&=\mathrm{p.v.}\int_{\R_+}\int_{\mathbb{S}^{d-1}}\left[ \phi(x+\rho \xi) - \phi(x)\right]\mu(\d \xi)\frac{\d \rho}{\rho^{1+\alpha}}
\end{align*}
(see \cite{Sat99} for the polar decomposition of the spectral measure) where $\mu$ is a non-degenerate measure on the unit sphere $\mathbb{S}^{d-1}$, i.e. $\mu$ is symmetric and $\exists \kappa \geq 1 : \forall \lambda \in \R^d$,
\begin{equation*}
\kappa^{-1} |\lambda|^\alpha \leq \int_{\mathbb{S}^{d-1}} |\lambda \cdot \xi|^\alpha \mu(\d \xi) \leq \kappa |\lambda|^\alpha,
\end{equation*} 
where ``$\cdot$" stands for the usual scalar product in $\R^d$.\\
%It was extended by Kolokoltsov in \cite{Kol00} to the $d$-dimensional case.\\
This general setting will not allow us to derive heat kernel estimates, because it does not lead to global estimates of the noise density. In \cite{Wa07}, Watanabe investigates the behavior of the density of an $\alpha$-stable process in terms of properties fulfilled by the support of its spectral measure. From this work, we know that whenever the measure $\mu$ is not equivalent to the Lebesgue measure on the unit sphere, accurate estimates on the density of the stable process are delicate to obtain. However, Watanabe (see \cite{Wa07}, Theorem 1.5) and Kolokoltsov (\cite{Kol00}, Propositions 2.1--2.5) showed that if $C^{-1} m(\d \xi) \leq \mu (\d \xi) \leq C m(\d \xi)$ (where $m$ is the uniform density on $\mathbb{S}^{d-1}$), the following estimates hold: there exists a constant $C$ depending only on $\alpha,d$, s.t. $\forall u\in \R_+^*, z\in \R^d$,
$$\frac{C^{-1}}{u^{\frac{d}{\alpha}}} \frac{1}{\left( 1+ \frac{|z|}{u^{\frac{1}{\alpha}}} \right)^{d+\alpha}} \leq p_\alpha (u,z) \leq \frac{C}{u^{\frac{d}{\alpha}}} \frac{1}{\left( 1+ \frac{|z|}{u^{\frac{1}{\alpha}}} \right)^{d+\alpha}}.$$

As our approach heavily relies on these global bounds, we have to assume that $\mu$ is equivalent to the Lebesgue measure on the sphere and that $\alpha \in (1,2)$.\\

Further properties related to the density of the driving noise are stated in Lemma \ref{pre-big-lemma} below.

\subsection{Notations and definitions}\label{subsec-def}
We will use the following notations : 
\begin{itemize}
\item The set of all parameters will be denoted $\Theta := \{\alpha,d,\beta,r,p,q,\Vert b \Vert_{L^{r} -\B^\beta_{p,q}}\}$
\item $a\lesssim b $ if there exists a constant $C$, which depends only on parameters from $\Theta$, such that $a\leq Cb$.
\item $a \asymp b$ if there exists a constant $C$, which depends only on parameters from $\Theta$, such that $ C^{-1} b \leq a\leq Cb$.
\item $\star$ denotes the spatial convolution.
\item $C^{0,1}([0,T]\times \R^d,\R)$ is the space of continuous in time and differentiable in space functions, $C_b^{0,1}([0,T]\times \R^d,\R)=C^{0,1}([0,T]\times \R^d,\R)\cap L^\infty([0,T]\times \R^d,\R)$ and $C_b^0 ([0,T]\times \R^d,\R)$ is the space of bounded continuous space-time functions.
\item For $f\in\mathcal{S}'(\R^d)$ (the dual of the Schwartz class $\mathcal{S}(\R^d)$) and $\phi \in C_0^\infty (\R^d)$ such that $\phi(0) \neq 0$, we set $\phi(D)f = \mathcal{F}^{-1} \left( \phi \times \mathcal{F}(f) \right) = \mathcal{F}^{-1}(\phi)\star f$, where $\mathcal{F}$ denotes the Fourier transform.
\item For $p\in [1,+\infty]$, we always denote by $p'\in [1,+\infty]$ s.t. $\frac{1}{p}+ \frac{1}{p'}=1$ its conjugate.\\
\end{itemize}

For the rest of this paper, we will denote by $\bar{p}_\alpha$ the following density :
\begin{equation}
\bar{p}_\alpha (v,z) = \frac{C_\alpha}{v^{\frac{d}{\alpha}}} \frac{1}{\left(1+\frac{|z|}{v^{\frac{1}{\alpha}}} \right)^{d+\alpha}},\qquad v>0,z\in \R^d,
\end{equation}
where $C_\alpha$ is chosen so that $\forall v>0, \int \bar{p}_\alpha (v,y) \d y = 1$.\\

We finally introduce the semi-group generated by $\mathcal{L}^\alpha$: for any bounded Borel function $\phi$,
\begin{equation}
P_t^\alpha [\phi](x) := \int_{\R^d}p_\alpha (t,y-x) \phi(y) \d y.
\end{equation}

\begin{paragraph}{Martingale solutions\\[0.5cm]}

As we work with a distributional drift, we need to specify what we call a ``solution" to (\ref{sde}). There are two ways to define a solution to (\ref{sde}) which we will investigate. We will first introduce the usual \textit{martingale solutions}. Those are defined through the mild solutions of the underlying PDE and are the ones that require the least regularity. Importantly, they are sufficient to state Theorem \ref{thm-main}. In Subsection \ref{subsec-weak-sols}, we will then give details about \textit{weak solutions}, as defined in \cite{CdRM22} in order to give a concrete dynamics for the solution.\\

Although our results are proved for martingale solutions (in which case they can be understood as a \textit{formal} discussion on the density of the process), they are mainly useful in the scope of weak solutions, as those introduce a dynamics and could be a starting point to establish numerical schemes for those equations.\\

Let us now fix $p,q,r \geq 1$. For the definition of a \textbf{martingale} solution to (\ref{sde}), we need the following conditions on $\alpha,\beta$, which we call a \textbf{good relation (GR)} :
\begin{equation}\label{GR}\tag{GR}
\alpha \in \left( \frac{1+\frac{d}{p}}{1-\frac{1}{r}},2 \right)\qquad \beta \in \left( \frac{1-\alpha+\frac{d}{p}+\frac{\alpha}{r}}{2} ,0\right)
\end{equation}
and we will denote 
\begin{equation}\label{theta}
\theta :=\beta  + \alpha -\frac{d}{p} - \frac{\alpha}{r},
\end{equation}
which corresponds to the parabolic bootstrap induced by the drift.
As explained in \cite{CdRM22}, this choice of $\theta$ implies that
$$(t,x) \mapsto \int_t^T P_{s-t}^\alpha [G\cdot v ](s,x) \d s $$ is well defined and belongs to $\mathcal{C}^{0,1}_b([0,T]\times \R^d,\R)$ as soon as $G\in L^r ([0,T],\B_{p,q}^\beta (\R^d,\R^d))$ and $v\in L^{\infty}([0,T],\B_{\infty,\infty}^{\theta-1-\eps}(\R^d,\R^d))$ for some $0\leq \eps \ll 1$.\\
\begin{remark}  Note that, here, we are only trying to give a meaning to the distributional product $G \cdot v$. Roughly speaking, for $p=r=+\infty$, by Bony's paraproduct rule, the total regularity of $G\cdot v$ is $\beta +\theta - 1 - \eps$, which we need to be positive. This is only possible if $\alpha$ and $\beta$ satisfy \textbf{(\ref{GR})}, hence the definition of the latter. The additional $\frac{d}{p}+\frac{\alpha}{r}$ corresponds to the lack of global boundedness of the drift $b$.\\
\end{remark}

This allows us to give the definition of mild solution to a PDE:\\

\begin{definition}{Mild solution of the underlying PDE. \\} Let $\alpha \in (1,2)$, $f: \R_+\times \R^d \rightarrow \R$ and $g : \R^d \rightarrow \R$. For a given $T>0$, we say that $u:[0,T]\times \R^d \rightarrow \R$ is a \textbf{mild} solution of the formal Cauchy problem $\mathcal{C}(b,\mathcal{L}^\alpha,f,g,T)$
\begin{equation*}
\left( \partial_t + b\cdot D + \mathcal{L}^\alpha \right)u(t,x)=f(t,x) \; \mathrm{on} \;[0,T)\times \R^d, \qquad u(T,\cdot) = g \; \mathrm{on} \; \R^d,
\end{equation*}
if it belongs to $\mathcal{C}^{0,1}([0,T]\times \R^d,\R)$ with $Du\in \mathcal{C}_b^0([0,T],\B_{\infty,\infty}^{\theta -1-\eps})$ for any $0<\eps \ll 1$ and $\theta = \beta  + \alpha -\frac{d}{p} - \frac{\alpha}{r}$, and if it satisfies 
\begin{equation}\label{mild-sol}
\forall (t,x) \in [0,T]\times \R^d, u(t,x)=P_{T-t}^\alpha [g](x) - \int_t^T P_{s-t}^\alpha [f-b\cdot Du ](s,x) \d s.
\end{equation}

\end{definition}

%\begin{remark}

In \cite{CdRM22}, Chaudru de Raynal and Menozzi proved existence and uniqueness of such solutions under \textbf{\eqref{GR}}, and also give information on their time regularity. Let us now introduce the notion of martingale problem (introduced in \cite{SV97} and then generalized in \cite{EK86}).
%\end{remark}

\begin{definition}\label{mart-sol}{Solution of the martingale problem\\} Let $\Omega = \mathcal D([0,T],\R^d)$ (the Skorokhod space of càdlàg functions). We say that a probability measure $\mathbb P$ on $\Omega$ equipped with its canonical filtration is a solution of the martingale problem associated with $(b,\mathcal{L}^\alpha,x)$ for $x \in \R^d$ if, denoting by $(x_t)_{t\in [0,T]}$ the associated canonical process,    
\begin{enumerate}
\item[(i)] $\displaystyle \mathbb P(x_0 = x) = 1$,
\item[(ii)] $\displaystyle \forall f \in \mathcal{ C}([0,T], \mathcal S(\R^d,\R))$,  $g \in \mathcal{C}^1(\R^d,\R)$ with $Dg \in \B_{\infty,\infty}^{\theta-1}(\R^d,\R^d)$,
$$\left(u(t,x_t) - \int_0^t f(s,x_s) ds - u(0,x)\right)_{0\le t \le T}$$ 
is a  martingale under $\mathbb{ P}$ where $u$ is the \textbf{mild} solution of the Cauchy problem $\mathcal{C}(b, \mathcal{L}^\alpha,f,g,T)$.
\end{enumerate}
\end{definition}
\begin{remark} The choice of the class of $f$ (here, $\mathcal{ C}([0,T], \mathcal S(\R^d,\R))$) is not critical. We only need it to be rich enough to characterize marginal laws, i.e. a class of functions $\Phi$ is sufficient if whenever two probability measures $\mu_1$ and $\mu_2$ satisfy $$\int \phi \d \mu_1 = \int \phi \d \mu_2, \qquad \forall \phi \in \Phi,$$ then $\mu_1 = \mu_2 $.
\end{remark}

Again, in \cite{CdRM22}, it is proved that there exists a unique solution to the martingale problem in the sense of the previous definition. We will call \textit{``martingale solution to \eqref{sde}"} the associated canonical process.

\end{paragraph}

\section{Main results}\label{sec-result}
\subsection{Main theorem}\label{subsec-main-thm}
%We now state our main result :
\begin{theorem}\label{thm-main} Fix the parameters $T>0$ and $\Theta = \{\alpha,d,\beta,r,p,q,\Vert b \Vert_{L^{r} -\B^\beta_{p,q}}\}$. Take $b\in L^r ([0,T],\B_{p,q}^\beta ( \R^d))$ and assume  \textbf{(\ref{GR})} holds. Consider the solution $\mathbb{ P}$ to the martingale problem associated with $(b,\mathcal{L}^\alpha,x)$ starting at time $s$ and denote $(x_t)_{t\in {[s,T]}}$ the associated canonical process. For all $t\in (s,T]$, $x_t$ admits a density $p(s,t,x,\cdot)$, i.e. for all $A\in \mathcal B (\R^d)$ (Borel $\sigma$-field of $\R^d$), $\P (x_t\in A)=\int_A p(s,t,x,y)\d y$ such that there exists $ C:=C(T,\Theta,\rho) \geq 1$ such that for all  $(x,y)\in \R^d,$
\begin{align}
C^{-1} \bar{p}_\alpha (t-s,y-x) \leq p(s,t,x,y) &\leq C \bar{p}_\alpha (t-s,y-x),\\
\left| \nabla_x p(s,t,x,y) \right| &\leq \frac{C}{(t-s)^{\frac{1}{\alpha}}} \bar{p}_\alpha (t-s,y-x),\\
\forall (y,y')\in \R^d, \qquad \left| p(s,t,x,y) - p(s,t,x,y')\right| &\leq \frac{C |y-y'|^\rho}{(t-s)^{\frac{\rho}{\alpha}}} \left( \bar{p}_\alpha (t-s,y-x) +\bar{p}_\alpha (t-s,y'-x)\right),\\
\forall (y,y')\in \R^d, \qquad \left|\nabla_x p(s,t,x,y) - \nabla_x p(s,t,x,y')\right| &\leq \frac{C |y-y'|^\rho}{(t-s)^{\frac{\rho +1}{\alpha}}} \left( \bar{p}_\alpha (t-s,x-y) +\bar{p}_\alpha (t-s,x-y')\right),
\end{align}
for any $\rho \in (-\beta,\gamma-\beta)$, where $\gamma:= \beta - \frac{1-\alpha + \frac{\alpha}{r} + \frac{d}{p}}{2}$ is the ``gap to singularity".
\end{theorem}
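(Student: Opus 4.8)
The plan is to obtain all four bounds first for a regularized equation with a smooth, bounded drift $b^m$ approximating $b$ in $L^r([0,T],\B_{p,q}^\beta)$, and then pass to the limit by compactness. For the mollified equation the density $p^m(s,t,x,y)$ exists classically and, crucially, solves the mild (Duhamel) equation
\begin{equation}\label{duhamel-proof}
p^m(s,t,x,y) = p_\alpha(t-s,y-x) + \int_s^t \int_{\R^d} p_\alpha(u-s,z-x)\, b^m(u,z)\cdot \nabla_z p^m(u,t,z,y)\,\d z\,\d u,
\end{equation}
or, what is more convenient for the gradient bounds, the dual/backward Duhamel formula where $\nabla_x$ hits the first factor. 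The strategy is a \emph{normalization} (or \emph{freezing/perturbative}) argument: one posits a candidate two-sided bound of the form $p^m \asymp \bar p_\alpha$ with a constant to be determined, plugs the Gaussian-type (here stable-type) ansatz into \eqref{duhamel-proof}, and iterates. The self-improving structure comes from the fact that the time singularity generated by $\nabla_z p^m$ is $(u-s)^{-1/\alpha}$ (or $(t-u)^{-1/\alpha}$ depending on which side one differentiates), while convolution of two stable-type kernels reproduces a stable-type kernel (this is the semigroup/convolution property baked into Lemma~\ref{pre-big-lemma}); the drift term then carries an extra factor controlled by the ``gap to singularity'' $\gamma>0$, which makes the Duhamel series converge and the perturbation subcritical.

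The main technical work — and the heart of the paper — is estimating the Duhamel iterate
\[
\int_s^t \int_{\R^d} \nabla_x p_\alpha(u-s,z-x)\, \big(b^m(u,z)\cdot \nabla_z p^m(u,t,z,y)\big)\,\d z\,\d u
\]
when $b^m$ is only bounded in the \emph{negative}-index space $\B_{p,q}^\beta$. One cannot estimate this pointwise in $z$; instead, for each fixed $(s,t,x,y)$ one views $z\mapsto \nabla_x p_\alpha(u-s,z-x)\,\nabla_z p^m(u,t,z,y)$ as a test function and uses the Besov duality $|\langle b^m(u,\cdot),\varphi\rangle| \lesssim \|b^m(u,\cdot)\|_{\B_{p,q}^\beta}\,\|\varphi\|_{\B_{p',q'}^{-\beta}}$, then bounds the $\B_{p',q'}^{-\beta}$ norm of the product via the thermic characterization of Besov spaces and the Besov product (Bony paraproduct) rules from Section~\ref{sec-lemmas}. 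Each derivative of $\bar p_\alpha$-type kernels costs a power of the respective time variable, and the fractional regularity $-\beta>0$ demanded of the product is exactly balanced by the available smoothing; integrating the resulting $(u-s)$ and $(t-u)$ powers over $[s,t]$ via a Beta-function argument leaves a surplus exponent proportional to $\gamma$. One then runs this estimate through the whole iterated Duhamel series, with the $m$-th term carrying a constant like $C^m/\Gamma(\text{something}\cdot m)$ so the series sums; the Hölder-in-$y$ and Hölder-in-$x$ bounds (third and fourth displays of Theorem~\ref{thm-main}) follow from the same machinery applied to differences, using $|\nabla^2 \bar p_\alpha|$-type controls and the elementary inequality $|a^\rho-b^\rho|\le |a-b|^\rho$ for $\rho\le 1$, with $\rho\in(-\beta,\gamma-\beta)$ chosen so that the product regularity $-\beta+\rho$ stays below the $\gamma$-budget.

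The final step is the passage to the limit $m\to\infty$. The estimates obtained for $p^m$ are uniform in $m$ (the constant $C$ depends only on $T$ and on $\|b\|_{L^r\B_{p,q}^\beta}$, not on the mollification), so the family $\{p^m(s,t,x,\cdot)\}$ is tight and equicontinuous on compacts away from $t=s$; combined with the convergence of the martingale solutions $\mathbb P^m\to\mathbb P$ (established via the martingale problem and the PDE well-posedness of \cite{CdRM22}, already quoted in the excerpt), one identifies any subsequential limit of $p^m$ as the density $p(s,t,x,\cdot)$ of the limit process, and the two-sided, gradient and Hölder bounds are inherited in the limit. I expect the chief obstacle to be precisely the bookkeeping in the Duhamel estimate: tracking the three independent ``regularity/singularity budgets'' (spatial Besov regularity $\beta$, the integrability losses $d/p$ and $\alpha/r$, and the gradient cost $1/\alpha$) simultaneously, keeping every exponent on the right side of the convergence thresholds, and verifying that the constants in the iterated series genuinely decay — this is where the sharpness of the threshold $\beta>\frac{1-\alpha+\alpha/r+d/p}{2}$ is actually used.
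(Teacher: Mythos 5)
Your high-level architecture matches the paper: mollify the drift, Duhamel expansion, Besov duality plus thermic characterization for the drift pairing, $m$-uniform bounds, Arzel\`a--Ascoli at the end; and the ``iterated series vs.\ Gronwall'' distinction is cosmetic, since the paper's Gronwall--Volterra argument on the normalized density is the closed form of the parametrix iteration you describe, with the positivity of $\gamma$ giving integrability of the Volterra kernel.

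What would actually break is the direction of the Duhamel expansion you display, where $\nabla_z$ falls on the unknown $p^m(u,t,z,y)$. After dualizing against $b^m(u,\cdot)$, you would then have to bound the $\B^{-\beta}_{p',q'}$ norm of a test function containing $\nabla_z p^m$, which requires exactly the uniform-in-$m$ gradient estimate you are trying to establish -- the scheme is circular. The paper uses the other expansion,
\begin{equation*}
p^m(s,t,x,y) = p_\alpha (t-s,y-x)+\int_s^t \int p^m (s,u,x,z)\, b^m (u,z)\cdot \nabla_z p_\alpha (t-u,y-z)\,\d z\, \d u,
\end{equation*}
with the gradient on the \emph{free} stable kernel. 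After normalizing $p^m(s,u,x,\cdot)$ by $\bar p_\alpha(u-s,\cdot-x)$, the test function becomes the product $\bar p_\alpha(u-s,\cdot-x)\,\nabla p_\alpha(t-u,y-\cdot)$, whose $\B^{-\beta}_{p',q'}$ norm is exactly what Lemma~\ref{big-lemma} controls; the unknown then enters only through $\Vert h_{s,x}(u,\cdot)\Vert_{\B^{\rho}_{\infty,\infty}}$ via the product rule \eqref{holder-besov}, and the loop closes. Your hedge about ``the dual/backward Duhamel formula'' suggests you sensed the issue, but the formula you displayed is the one that cannot close. A further load-bearing ingredient your sketch skips: the Gronwall is run not on $\Vert h\Vert_{L^\infty}$ or on $\Vert h\Vert_{\B^\rho_{\infty,\infty}}$, but on the weighted quantity $g(s,x,t)=\Vert h_{s,x}(t,\cdot)\Vert_{L^\infty}+(t-s)^{\rho/\alpha}\mathcal T^\rho_{\infty,\infty}[h_{s,x}(t,\cdot)]$; Lemma~\ref{thermic-h} shows the thermic part carries an extra $(t-s)^{-\rho/\alpha}$ singularity not present in the $L^\infty$ part, so without the weight the Volterra kernel is not integrable and the argument does not close. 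The H\"older-in-$y$ bounds then drop out directly from $\Vert h_{s,x}(t,\cdot)\Vert_{\B^\rho_{\infty,\infty}}\lesssim (t-s)^{-\rho/\alpha}$, not from an elementary inequality on differences of powers.
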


\begin{remark}[Logarithmic gradient estimates.] Note that, in the current strictly stable regime ($\alpha \in (1,2)$) and given the previous theorem, one can easily compute global logarithmic gradient estimates for $p$:
	\begin{align*}
		|\nabla_x \log  p(s,t,x,y)| = \frac{|\nabla_x  p(s,t,x,y)|}{ p(s,t,x,y)}	\leq \frac{C}{(t-s)^{\frac{1}{\alpha}}} .
	\end{align*}
\end{remark}
\noindent The sketch of the proof of Theorem \ref{thm-main} is as follows:
\begin{itemize}
	\item Take a smooth $b^m\in C^{\infty}_b$ to approach $b$ and consider the \textit{mollified} equation 
	\begin{equation}
		\d X_t^m = b^m(t,X_t^m)\d t + \d Z_t.
	\end{equation}
	\item Compute estimates on the density of $(X^m_t)$ which are uniform in $m$, using a Duhamel expansion and a normalization method first introduced by \cite{MPZ21} (Brownian setting with unbounded Hölder drift) and then exploited in \cite{JM21} (Brownian setting with $L^q-L^p$ drift) and \cite{MZ22} (unbounded drift, stable driven with multiplicative isotropic noise).
	\item Conclude with a compactness argument.
\end{itemize}
\subsection{Weak solutions}\label{subsec-weak-sols}

Although mild solutions allow for a formal discussion on the density of the underlying process in the SDE \eqref{sde}, they do not exhibit anything about its dynamics nor about its SDE interpretation. In order to build the dynamics of the equation, \cite{CdRM22} introduced a weak formulation of the problem. To such end, they used the notion of $L^\ell$ stochastic Young integral, in the sense of the definition first introduced by \cite{CG16} and  \cite{DD16}.\\

In order to define the upcoming notion of solution, we need slightly stronger assumptions on $\alpha,\beta$. We say that $\alpha,\beta$ satisfy a \textbf{good relation for the dynamics (GRD)} if the following holds:
\begin{equation}\label{GRD}\tag{GRD}
	\alpha \in \left( \frac{1+\frac{d}{p}}{1-\frac{1}{r}},2 \right)\qquad \beta \in \left( \frac{1-\alpha+\frac{2d}{p}+\frac{2\alpha}{r}}{2} ,0\right).
\end{equation}
This stronger condition is required in order for the following definition to make sense:
\begin{definition}\label{weak-sol}
	We call \textbf{weak} solution of the formal SDE (\ref{sde}) a pair $(Y,Z)$ of adapted processes on a filtered space $(\Omega,\mathcal{F},\{ \mathcal{F}_T \}_{t\geq 0},\mathbb{P})$ such that $Z$ is an $\{ \mathcal{F}_T \}_{t\geq 0}$ $\alpha$-stable process and $(Y,Z)$ satisfies
	\begin{equation}\label{weak-sol-eq}
		Y_t = x +\int_0^t \mathfrak{B}(s,Y_s,\d s) + Z_t, \qquad \mathbb{P}\mathrm{-a.s.}, \qquad \mathbb{E} \left|\int_0^t \mathfrak{B}(s,Y_s,\d s) \right|<\infty
	\end{equation}
	for any $t\in [0,T]$, where
	\begin{equation}\label{dyn-b}
		\mathfrak{B} : (v,x,h) \mapsto  \int_0^h \d r \int_{\R^d} p_\alpha (h-r,x-y) b(v+r,y)\d y
	\end{equation}
	and where the integral in (\ref{weak-sol-eq}) is understood as an $L^1$ stochastic Young integral and imposes the stronger \textbf{(\ref{GRD})} condition.\\
	
\end{definition}

With this explicit definition, it becomes fathomable to develop numerical schemes for the SDE. In particular, as Theorem \ref{thm-main} is proved under \textbf{(\ref{GR})}, it is valid under the stronger \textbf{(\ref{GRD})} conditions and thus holds for the density of weak solutions.

\section{Technical results and Besov spaces}\label{sec-lemmas}
\subsection{A few useful lemmas}
In this section, we introduce a few lemmas that will be used in our proofs, and which we prove in Section \ref{sec-proof}. The following Lemma \ref{deriv-density-lemma} will be profusely used. It indicates how $p_\alpha$ and $\bar{p}_\alpha$ can be bounded, and specifies the relationship between the time and space scales for $\alpha$-stable densities.

\begin{lemma}[Bounds and $L^p$ estimates for the noise density]\label{deriv-density-lemma}\label{pre-big-lemma}\hspace{1cm}
\begin{itemize}
\item Time scales for spatial moments\\
$\forall \alpha \in (1,2),\zeta \in (0,\alpha),\exists C>0 : \forall t>0$, 

\begin{equation}\label{spatial-moments}
\int_{\R^d} \bar{p}_\alpha (t,y) |y|^\zeta \mathrm{d}y \leq C t^{\frac{\zeta}{\alpha}}.
\end{equation}
\item Bounds for space and time derivatives of the $\alpha$-stable kernel.\\
$\forall \ell\in \{1,2\},\forall  t>0,\forall y \in \R^d$,

\begin{equation}\label{space-time-derivatives-bounds}
\left| D_y^\ell p_\alpha (t,y) \right|\lesssim \frac{1}{t^{\frac{\ell}{\alpha}}}\bar{p}_\alpha (t,y),
\end{equation} 
%\begin{equation}\label{space-time-derivatives-bounds-2}
%\left| \partial_t^\ell p_{\alpha} (t,y) \right|\lesssim \frac{1}{t^\ell} \bar{p}_\alpha (t,y).
%\end{equation}
\item Distortion\\
$\forall t \in\R_+, \forall \ell\geq 1$,
\begin{equation}\label{disto}
	\Vert \bar{p}_\alpha (t,\cdot) \Vert_{L^{\ell'}} \lesssim t^{-\frac{d}{\alpha \ell}}.
\end{equation}
\item Convolution\\
$\forall x,y \in \R^d$, $\forall 0 \leq s \leq u \leq t$, $\forall \ell\geq 1$,
\begin{equation}\label{p-q-convo}
	\Vert \bar{p}_\alpha (t-u,\cdot-y) \bar{p}_\alpha (u-s,x-\cdot)\Vert_{L^{\ell'}} \lesssim \left[ \frac{1}{(t-u)^{\frac{d}{\alpha  \ell}}} +\frac{1}{(u-s)^{\frac{d}{\alpha  \ell}}}\right] \bar{p}_\alpha (t-s,x-y).
\end{equation}
\end{itemize}
\end{lemma}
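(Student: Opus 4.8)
\textbf{Proof plan for Lemma \ref{deriv-density-lemma}.}

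The plan is to exploit the explicit form of $\bar p_\alpha(t,y) = C_\alpha t^{-d/\alpha}(1 + |y|/t^{1/\alpha})^{-(d+\alpha)}$ together with the scaling relation $\bar p_\alpha(t,y) = t^{-d/\alpha}\bar p_\alpha(1, t^{-1/\alpha}y)$, which reduces almost every estimate to a statement about the fixed profile $\varphi(y) := C_\alpha(1+|y|)^{-(d+\alpha)}$ and its integrability. First, for \eqref{spatial-moments}, I would change variables $y = t^{1/\alpha} z$, turning $\int \bar p_\alpha(t,y)|y|^\zeta\,\d y$ into $t^{\zeta/\alpha}\int \varphi(z)|z|^\zeta\,\d z$; the remaining integral is finite precisely because $\zeta < \alpha$ makes the integrand decay like $|z|^{\zeta - d - \alpha}$ at infinity, which is integrable. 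For \eqref{disto}, the same substitution gives $\Vert \bar p_\alpha(t,\cdot)\Vert_{L^{\ell'}} = t^{-d/\alpha} \cdot t^{d/(\alpha\ell')}\Vert\varphi\Vert_{L^{\ell'}} = t^{-d/(\alpha\ell)}\Vert\varphi\Vert_{L^{\ell'}}$ (using $1/\ell + 1/\ell' = 1$), and $\varphi \in L^{\ell'}$ for every $\ell' \in [1,\infty]$ since $(d+\alpha)\ell' > d$.

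For the derivative bounds \eqref{space-time-derivatives-bounds}, I would not use the heat-kernel-type estimate directly but instead invoke the known pointwise bounds on $D_y^\ell p_\alpha$ from the cited literature (Kolokoltsov \cite{Kol00}, Watanabe \cite{Wa07}, or the standard references on stable densities): by the scaling $p_\alpha(t,y) = t^{-d/\alpha}p_\alpha(1,t^{-1/\alpha}y)$ one gets $D_y^\ell p_\alpha(t,y) = t^{-(d+\ell)/\alpha}(D^\ell p_\alpha)(1, t^{-1/\alpha}y)$, so it suffices to know that $|(D^\ell p_\alpha)(1,z)| \lesssim (1+|z|)^{-(d+\alpha)}$ for $\ell \in \{1,2\}$, which holds because derivatives of the stable density have the same polynomial tail as the density itself (a consequence of the smoothness and decay of the characteristic exponent away from the origin); then re-expressing $(1+|z|)^{-(d+\alpha)}$ in terms of $\bar p_\alpha$ gives the claimed $t^{-\ell/\alpha}\bar p_\alpha(t,y)$. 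The equivalence $p_\alpha \asymp \bar p_\alpha$ from Subsection \ref{subsec-density-noise} lets me pass freely between the two densities.

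Finally, for the convolution inequality \eqref{p-q-convo}, the key tool is a "convolution-like" inequality for the profiles: one shows $\bar p_\alpha(t-u, w - y)\,\bar p_\alpha(u-s, x-w) \lesssim \bar p_\alpha(t-s, x-y)\big[\bar p_\alpha(t-u,w-y) + \bar p_\alpha(u-s,x-w)\big]$ pointwise in $w$, which follows by splitting on whether $|w-y| \ge |x-y|/2$ or $|x-w| \ge |x-y|/2$ (one of the two always holds) and comparing the polynomial factors — this is the standard Chapman–Kolmogorov-type manipulation for stable kernels. Taking the $L^{\ell'}$ norm in $w$ of both sides and applying \eqref{disto} to each of the two terms on the right yields the bracketed sum $(t-u)^{-d/(\alpha\ell)} + (u-s)^{-d/(\alpha\ell)}$ times $\bar p_\alpha(t-s,x-y)$. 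The main obstacle is making the pointwise profile comparison in this last step fully rigorous with explicit constants — in particular handling the regime where neither argument is large and the $(1+\cdot)^{-(d+\alpha)}$ factors must be bounded below by constants — but this is routine once the case split is organized carefully; everything else is bookkeeping with the scaling relation.
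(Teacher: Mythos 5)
Your treatment of \eqref{spatial-moments} and \eqref{disto} via the scaling $\bar p_\alpha(t,\cdot)=t^{-d/\alpha}\bar p_\alpha(1,t^{-1/\alpha}\cdot)$ and the integrability of the profile $(1+|z|)^{-(d+\alpha)}$ is exactly what the paper does, and your appeal to the literature (\cite{Kol00}, and here \cite{CdRM22}) for \eqref{space-time-derivatives-bounds} is likewise the paper's route. For \eqref{p-q-convo} you reorganize the argument: you isolate a pointwise product inequality
\begin{equation*}
\bar p_\alpha(t-u, w - y)\,\bar p_\alpha(u-s, x-w) \lesssim \bar p_\alpha(t-s, x-y)\bigl[\bar p_\alpha(t-u,w-y) + \bar p_\alpha(u-s,x-w)\bigr]
\end{equation*}
and then take $L^{\ell'}$ norms in $w$ and apply \eqref{disto}. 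The paper instead computes $\mathfrak{I}:=\Vert\cdot\Vert_{L^{\ell'}}^{\ell'}$ directly, with the case analysis carried out inside the integral over $z$. Both are correct and rest on the same elementary properties of $\bar p_\alpha$; your factoring of the pointwise statement is a mild but clean variant that makes the reduction to \eqref{disto} transparent.

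One caveat worth flagging explicitly: the case split you name, $|w-y|\geq|x-y|/2$ versus $|x-w|\geq|x-y|/2$, does \emph{not} suffice on its own to prove the pointwise inequality. In the diagonal regime $|x-y|\leq(t-s)^{1/\alpha}$, bounding (say) $\bar p_\alpha(t-u,w-y)\lesssim\bar p_\alpha(t-u,x-y)$ is useless because $\bar p_\alpha(t-u,x-y)$ can be much larger than $\bar p_\alpha(t-s,x-y)$ when $t-u\ll t-s$. What rescues the argument is the additional (global) observation that one of $t-u$, $u-s$ is $\geq(t-s)/2$: the corresponding factor is then controlled by its $L^\infty$ value $\asymp(t-s)^{-d/\alpha}\asymp\bar p_\alpha(t-s,x-y)$, and the other factor carries the singularity after integration. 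This is precisely the paper's diagonal sub-case (the paper splits on $|x-y|\lessgtr(t-s)^{1/\alpha}$ first, then on times in the diagonal case and on the spatial distances $|x-z|,|z-y|$ in the off-diagonal case). You gesture at this difficulty (``the regime where neither argument is large''), but the supplementary time split is the actual mechanism, not merely careful bookkeeping of constants; with it added, your pointwise inequality holds and the rest of your argument goes through.
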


\begin{remark} If $\ell'=1$, (\ref{p-q-convo}) reads
	$$\int \bar{p}_\alpha (t-u,z-y) \bar{p}_\alpha (u-s,x-z) \d z \lesssim \bar{p}_\alpha (t-s,x-y),$$
	and in this case, the proof is simply based on the fact that $\bar{p}_\alpha \asymp p_\alpha$ and the convolution properties of $p_\alpha$. For $\ell'>1$, singularities with the same scale as in \eqref{disto} appear.
\end{remark}

\begin{lemma}[Taylor-Laplace for stable densities]\label{holder-noise}
	
	For all $x,w,z \in \R^d, t > 0$ s.t. $|w-z|\lesssim t^{1/\alpha}$, $\forall \ell\in \{ 0,1\}, \forall \zeta \in (0,1]$,
\begin{itemize}
	\item Bounds for the stable density
	\begin{equation}\label{taylor-stable-density}
		|D^\ell p_\alpha (t,x-w) - D^\ell p_\alpha (t,x-z)| \lesssim \frac{|z-w|^\zeta}{t^{\frac{\ell+\zeta}{\alpha}}}\bar{p}_\alpha (t,x-w).
	\end{equation}
	\item Bounds for $\bar{p}_\alpha$
	\begin{equation}\label{taylor-approx-density}
		|D^\ell \bar{p}_\alpha (t,x-w) - D^\ell \bar{p}_\alpha (t,x-z)| \lesssim \frac{|z-w|^\zeta}{t^{\frac{\ell+\zeta}{\alpha}}}\bar{p}_\alpha (t,x-w).
	\end{equation}
\end{itemize}
\end{lemma}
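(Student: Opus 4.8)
The plan is to reduce both estimates to their $\zeta=1$ instance, which follows from Taylor's formula, and to transfer all the profiles back to the single reference point $x-w$ by means of an elementary \emph{local comparison} for $\bar p_\alpha$. That comparison is the only real ingredient: from the explicit expression of $\bar p_\alpha$ one checks that whenever $|y-y'|\le c\,t^{1/\alpha}$ one has $1+|y|/t^{1/\alpha}\le (1+c)\big(1+|y'|/t^{1/\alpha}\big)$, hence $\bar p_\alpha(t,y)\asymp \bar p_\alpha(t,y')$ with a constant depending only on $c$, $d$ and $\alpha$. This is precisely where the standing assumption $|w-z|\lesssim t^{1/\alpha}$ is used.

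For \eqref{taylor-stable-density} with $\zeta=1$, I would write $z_\lambda:=z+\lambda(w-z)$ and use
\[
D^\ell p_\alpha(t,x-w)-D^\ell p_\alpha(t,x-z)=\int_0^1 D^{\ell+1}p_\alpha\big(t,x-z_\lambda\big)\cdot (w-z)\,\d\lambda .
\]
Since $\ell\in\{0,1\}$ we have $\ell+1\in\{1,2\}$, so \eqref{space-time-derivatives-bounds} from Lemma~\ref{deriv-density-lemma} gives $|D^{\ell+1}p_\alpha(t,x-z_\lambda)|\lesssim t^{-(\ell+1)/\alpha}\bar p_\alpha(t,x-z_\lambda)$, and, because $|(x-z_\lambda)-(x-w)|=(1-\lambda)|w-z|\le|w-z|\lesssim t^{1/\alpha}$, the local comparison replaces $\bar p_\alpha(t,x-z_\lambda)$ by $\bar p_\alpha(t,x-w)$ up to a constant uniform in $\lambda\in[0,1]$. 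Integrating in $\lambda$ yields the bound with $\zeta=1$. For general $\zeta\in(0,1)$ I would interpolate between this estimate and the crude bound coming from the triangle inequality together with \eqref{space-time-derivatives-bounds} at order $\ell$ (again followed by the local comparison), namely $|D^\ell p_\alpha(t,x-w)-D^\ell p_\alpha(t,x-z)|\lesssim t^{-\ell/\alpha}\bar p_\alpha(t,x-w)$: bounding the left-hand side by $\min\big(A,\,A\,|w-z|\,t^{-1/\alpha}\big)$ with $A\asymp t^{-\ell/\alpha}\bar p_\alpha(t,x-w)$ and using $\min(1,r)\le r^\zeta$ for $r\ge0$, $\zeta\in(0,1]$ gives exactly $\lesssim |w-z|^\zeta t^{-(\ell+\zeta)/\alpha}\bar p_\alpha(t,x-w)$.

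The bound \eqref{taylor-approx-density} for $\bar p_\alpha$ follows by the very same scheme, the only change being that \eqref{space-time-derivatives-bounds} is replaced by its analogue for $\bar p_\alpha$, i.e. $|D^{\ell+1}\bar p_\alpha(t,y)|\lesssim t^{-(\ell+1)/\alpha}\bar p_\alpha(t,y)$ for $\ell\in\{0,1\}$, which is checked directly by differentiating the explicit rational expression (each spatial derivative produces a factor $\lesssim t^{-1/\alpha}$ and strengthens, hence does not weaken, the polynomial decay). I expect no genuine obstacle: the only points requiring care are the uniform-in-$\lambda$ application of the local comparison inside the Taylor step — which is exactly what the hypothesis $|w-z|\lesssim t^{1/\alpha}$ provides — and, for the $\bar p_\alpha$ case, the mild regularity bookkeeping (the profile is smooth off the diagonal $x=w$, a single point which does not affect the a.e. identities used).
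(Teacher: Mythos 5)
Your proof is correct and follows essentially the same route as the paper: a Taylor expansion of order one, the derivative bound \eqref{space-time-derivatives-bounds} on $D^{\ell+1}p_\alpha$, and the comparison $\bar p_\alpha(t,x-z_\lambda)\asymp\bar p_\alpha(t,x-w)$ uniformly in $\lambda\in[0,1]$ under $|w-z|\lesssim t^{1/\alpha}$. The only cosmetic difference is that the paper passes from $\zeta=1$ to $\zeta\in(0,1)$ by directly writing $\frac{|z-w|}{t^{1/\alpha}}\lesssim\big(\frac{|z-w|}{t^{1/\alpha}}\big)^\zeta$ inside the integrand (using the hypothesis $|w-z|\lesssim t^{1/\alpha}$), whereas you recover it by interpolating the $\zeta=1$ estimate against the crude triangle-inequality bound via $\min(1,r)\le r^\zeta$; both use the same hypothesis in the same place and yield identical constants up to universal factors.
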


\begin{remark}
From the definiton of $\bar{p}_\alpha(t,x)=\frac{C_\alpha}{t^{\frac{d}{\alpha}}} \frac{1}{\left( 1+ \frac{|x|}{t^{\frac{1}{\alpha}}} \right)^{d+\alpha}}$, one can gather the following:\\
Let $x\in \R^d$ and $t>0$.\\
	\begin{itemize}
		\item If $|x|\ge t^{\frac{1}{\alpha}}$ (off-diagonal regime),
		\begin{equation}\label{off-diag}
			\bar{p}_\alpha (t,x) \asymp \frac{t}{|x|^{d+\alpha}}.
		\end{equation}
		\item If $|x|\leq t^{\frac{1}{\alpha}}$ (diagonal regime),
		\begin{equation}\label{diag}
			\bar{p}_\alpha (t,x) \asymp \frac{1}{t^{\frac{d}{\alpha}}}.
		\end{equation}
	\end{itemize}
\end{remark}
%
%\begin{remark}
%Those two regimes will be central in our proofs. The scales which we consider for these regimes is that of the auto-similarity of the noise (see Remark \ref{rq-regimes} below for the handling of those regimes in our proofs). 
%\end{remark}

\begin{lemma}[Besov estimates for $\bar{p}_\alpha$]\label{big-lemma}\hspace{0.5cm}
\begin{itemize}
	\item $\forall 0\leq s \leq u \leq t$, $\forall (x,y)\in \R^d$, $\forall \zeta \in (-\beta,1]$, $\forall j,k \in \{ 0,1\}$,
	\begin{align}\label{big-lemma-1}
		\Vert\nabla^j &\bar{p}_{\alpha} (u-s,x-\cdot) \nabla^k p_\alpha (t-u,y-\cdot) \Vert_{\B^{-\beta}_{p',q'}} \nonumber \\
		&\lesssim \frac{\bar{p}_\alpha (t-s,x-y)}{(u-s)^{\frac{j}{\alpha}}(t-u)^{\frac{k}{\alpha}}} (t-s)^{\frac{\beta}{\alpha}}\left[ \frac{1}{(t-u)^{\frac{d }{\alpha  p}}} +\frac{1}{(u-s)^{\frac{d }{\alpha  p}}}\right] \left[(t-s)^{\frac{\zeta}{\alpha}}\left( \frac{1}{(t-u)^{\frac{\zeta }{\alpha}}} + \frac{1}{(u-s)^{\frac{\zeta }{\alpha}}}\right) +1 \right] .
	\end{align}
	\item  $\forall 0\leq s \leq u \leq t$, $\forall (x,y,w)\in \R^d$, $\forall \zeta \in (-\beta,1]$,
	\begin{align}\label{big-lemma-2}
		&\left\Vert \bar{p}_{\alpha} (u-s,x-\cdot)\left[\frac{\nabla p_\alpha (t-u,w-\cdot)}{\bar{p}_{\alpha} (t-s,w-x)} -\frac{\nabla p_\alpha (t-u,y-\cdot)}{\bar{p}_{\alpha} (t-s,y-x)} \right] \right\Vert_{ \B_{p',q'}^{-\beta}} \nonumber \\& \qquad\lesssim \frac{|w-y|^\zeta }{(t-u)^{\frac{\zeta+1}{\alpha}}} (t-s)^{\frac{\beta}{\alpha}} \left[ \frac{1}{(t-u)^{\frac{d }{\alpha  p}}} +\frac{1}{(u-s)^{\frac{d }{\alpha  p}}}\right] \left[(t-s)^{\frac{\zeta}{\alpha}}\left( \frac{1}{(t-u)^{\frac{\zeta }{\alpha}}} + \frac{1}{(u-s)^{\frac{\zeta }{\alpha}}}\right) +1 \right].
	\end{align}
\end{itemize}

\end{lemma}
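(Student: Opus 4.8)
The plan is to exploit the thermic (heat-semigroup) characterization of the Besov norm $\Vert\cdot\Vert_{\B^{-\beta}_{p',q'}}$: since $-\beta>0$, one has $\Vert f\Vert_{\B^{-\beta}_{p',q'}} \asymp \Vert f\Vert_{L^{p'}} + \big\Vert v\mapsto v^{\lceil -\beta/\alpha\rceil - (-\beta)/\alpha}\,\Vert \partial_v^{\lceil -\beta/\alpha\rceil} \tilde P_v f\Vert_{L^{p'}}\big\Vert_{L^{q'}(\d v/v)}$ for a suitable reference heat (or stable) semigroup $\tilde P_v$, or equivalently the characterization by first/second differences. For the first estimate \eqref{big-lemma-1} I would write $f = \nabla^j\bar p_\alpha(u-s,x-\cdot)\,\nabla^k p_\alpha(t-u,y-\cdot)$ and treat the two contributions separately. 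The $L^{p'}$-term is handled directly by the convolution-type bound \eqref{p-q-convo} from Lemma \ref{pre-big-lemma} together with the derivative bounds \eqref{space-time-derivatives-bounds}, producing the factor $(u-s)^{-j/\alpha}(t-u)^{-k/\alpha}\big[(t-u)^{-d/(\alpha p)}+(u-s)^{-d/(\alpha p)}\big]\bar p_\alpha(t-s,x-y)$, i.e. the stated right-hand side with the bracket $[\cdots]$ specialized to its ``$+1$'' summand and $(t-s)^{\beta/\alpha}$ supplied by scaling. For the semigroup (or finite-difference) term one estimates $\Vert \partial_v^{\ell_0}\tilde P_v f\Vert_{L^{p'}}$: distributing the increments/derivatives by Leibniz onto the two factors and applying the Hölder-type bounds of Lemma \ref{holder-noise} (namely \eqref{taylor-stable-density}–\eqref{taylor-approx-density} with exponent $\zeta$) converts each spatial increment of size $\lesssim v^{1/\alpha}$ into a gain $v^{\zeta/\alpha}\big[(t-u)^{-\zeta/\alpha}+(u-s)^{-\zeta/\alpha}\big]$; integrating $v^{(\lceil-\beta/\alpha\rceil-(-\beta)/\alpha)q'}v^{(\zeta/\alpha) q'}$ against $\d v/v$ up to the natural cutoff $v\asymp (t-s)$ is convergent precisely because $\zeta>-\beta$, and yields the factor $(t-s)^{(\beta+\zeta)/\alpha}$, which after factoring $(t-s)^{\beta/\alpha}$ gives the ``$(t-s)^{\zeta/\alpha}(\cdots)$'' summand of the bracket. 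Combining the two contributions gives \eqref{big-lemma-1}.

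For the second estimate \eqref{big-lemma-2} the new feature is the difference quotient $\frac{\nabla p_\alpha(t-u,w-\cdot)}{\bar p_\alpha(t-s,w-x)} - \frac{\nabla p_\alpha(t-u,y-\cdot)}{\bar p_\alpha(t-s,y-x)}$ in the $w,y$ variables. I would split this, as is standard in the normalization method, into two pieces: one in which only the numerator is differenced, $\big[\nabla p_\alpha(t-u,w-\cdot)-\nabla p_\alpha(t-u,y-\cdot)\big]/\bar p_\alpha(t-s,w-x)$, and one in which only the normalizing denominator is differenced, $\nabla p_\alpha(t-u,y-\cdot)\big[\bar p_\alpha(t-s,w-x)^{-1}-\bar p_\alpha(t-s,y-x)^{-1}\big]$. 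Since the estimate is only claimed for $|w-y|\lesssim (t-s)^{1/\alpha}$ (the regime where the normalization is used), Lemma \ref{holder-noise} applies to the numerator difference and gives $|w-y|^\zeta (t-u)^{-(1+\zeta)/\alpha}\bar p_\alpha(t-u,\cdot)$-type control; for the denominator difference one uses \eqref{taylor-approx-density} with $\ell=0$ plus the off-/on-diagonal comparison \eqref{off-diag}–\eqref{diag} to show $|\bar p_\alpha(t-s,w-x)^{-1}-\bar p_\alpha(t-s,y-x)^{-1}|\lesssim |w-y|^\zeta (t-s)^{-\zeta/\alpha}\,\bar p_\alpha(t-s,y-x)^{-1}$ (with the comparability $\bar p_\alpha(t-s,w-x)\asymp\bar p_\alpha(t-s,y-x)$ on this range). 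After these reductions each piece is structurally of the same type as in the proof of \eqref{big-lemma-1}, so one re-runs the thermic-characterization argument: the $L^{p'}$-part uses \eqref{p-q-convo}, and the finite-difference part uses Leibniz plus \eqref{taylor-stable-density} and converges for $\zeta>-\beta$, delivering $(t-s)^{\beta/\alpha}$, the $p$-dependent bracket, and the $\zeta$-bracket exactly as stated.

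The main obstacle I anticipate is bookkeeping the interplay between the \emph{two} small-time scales $u-s$ and $t-u$ inside a \emph{single} Besov norm at scale $t-s$: when one distributes increments of size $v^{1/\alpha}$ (with $v$ ranging up to $t-s$) by Leibniz onto a product of two kernels living at the finer scales $u-s$ and $t-u$, an increment of size $v^{1/\alpha}$ can be much larger than $(t-u)^{1/\alpha}$ or $(u-s)^{1/\alpha}$, so the naive Hölder bound is not directly available and one must instead bound by the (bounded) $L^\infty$/$L^{p'}$ norm of the kernel itself when $v$ exceeds the relevant fine scale, and by the Hölder bound when it does not; this case split is exactly what produces the structure $(t-u)^{-\zeta/\alpha}+(u-s)^{-\zeta/\alpha}$ rather than a single negative power, and getting the cutoffs and the resulting $\d v/v$ integrals to land on precisely the stated exponents (including the "$+1$" term accounting for $\zeta = 0$ behaviour) is the delicate part. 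A secondary technical point is justifying the thermic characterization with a \emph{stable} semigroup $P^\alpha_v$ in place of the Gaussian one, and checking that the derivative/increment bounds of Lemmas \ref{pre-big-lemma}–\ref{holder-noise} are exactly what is needed to make that substitution lossless; this is routine but must be stated carefully since $\beta<0$ and the characterization requires $\lceil-\beta/\alpha\rceil$ derivatives of the semigroup.
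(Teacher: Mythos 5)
Your proposal follows essentially the same route as the paper: thermic characterization with a single semigroup derivative ($n=1$), cancellation argument to exploit a H\"older gain, regime split between "Taylor" (diagonal) and "crude" (off--diagonal) depending on whether the increment is small relative to the fine scales $u-s$ and $t-u$, convolution bound \eqref{p-q-convo} and moment bound \eqref{spatial-moments} to control the resulting $L^{p'}$ norms, and the cutoff $v\asymp t-s$ in the $v$--integral. Two points nevertheless deserve attention.

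First, you write that \eqref{big-lemma-2} "is only claimed for $|w-y|\lesssim (t-s)^{1/\alpha}$". It is not: the lemma holds for all $(x,y,w)\in\R^d$. Because you skip the complementary regime your proof of \eqref{big-lemma-2} has a gap, albeit an easy one to close. The natural threshold for this estimate is $(t-u)^{1/\alpha}$ (that is the scale of the stable kernels being differenced, not $(t-s)^{1/\alpha}$): when $|w-y|\geq (t-u)^{1/\alpha}$ the Taylor step does not apply, but one simply uses the triangle inequality together with $|w-y|^\zeta/(t-u)^{\zeta/\alpha}\geq 1$ to bound
\begin{equation*}
\left| \frac{\nabla p_\alpha (t-u,w-\cdot)}{\bar{p}_{\alpha} (t-s,w-x)} - \frac{\nabla p_\alpha (t-u,y-\cdot)}{\bar{p}_{\alpha} (t-s,y-x)} \right| \lesssim \frac{|w-y|^\zeta}{(t-u)^{\zeta/\alpha}} \left[ \frac{|\nabla p_\alpha (t-u,w-\cdot)|}{\bar{p}_\alpha (t-s,w-x)} + \frac{|\nabla p_\alpha (t-u,y-\cdot)|}{\bar{p}_\alpha(t-s,y-x)} \right],
\end{equation*}
after which two applications of \eqref{big-lemma-1} (with $j=0$, $k=1$) finish the job. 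In the diagonal regime $|w-y|\leq (t-u)^{1/\alpha}$ your numerator/denominator split is exactly what the paper does; note that for the denominator piece the factor $(t-s)^{-\zeta/\alpha}$ you extract is simply replaced by the weaker $(t-u)^{-\zeta/\alpha}$ to match the stated bound.

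Second, your attribution of the factor $(t-s)^{\beta/\alpha}$ to "scaling" of the $L^{p'}$ (non-thermic) piece is a hand--wave: the non-thermic piece produces no such factor directly. In the paper it simply gets absorbed because $(t-s)^{\beta/\alpha}\geq 1$ for $\beta<0$ and $t-s\leq 1$, so the $L^{p'}$ bound is dominated by the thermic one and hence by the right-hand side; the $(t-s)^{\beta/\alpha}$ factor genuinely comes from evaluating $\int_0^{t-s} v^{q'\beta/\alpha-1}\d v$ and $\int_{t-s}^1 v^{q'\beta/\alpha-1}\d v$. This does not alter the conclusion but is worth saying explicitly rather than invoking scaling.
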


Note that, in \eqref{big-lemma-1} and \eqref{big-lemma-2}, the additional term $(t-s)^{\frac{\zeta}{\alpha}}\left( \frac{1}{(t-u)^{\frac{\zeta }{\alpha}}} + \frac{1}{(u-s)^{\frac{\zeta }{\alpha}}}\right)$ will disappear though time integration and is mainly due to technical considerations related to the thermic characterizations of the Besov spaces considered for the above norms.\\

\noindent For the rest of this paper, we will use the notation 
\begin{equation}\label{lust}
	\mathfrak{L}(u,s,t,\zeta) :=  \frac{(t-s)^{\frac{\beta}{\alpha}}}{(t-u)^{\frac{1}{\alpha}}} \left[ \frac{1}{(t-u)^{\frac{d }{\alpha  p}}} +\frac{1}{(u-s)^{\frac{d }{\alpha  p}}}\right] \left[(t-s)^{\frac{\zeta}{\alpha}}\left( \frac{1}{(t-u)^{\frac{\zeta }{\alpha}}} + \frac{1}{(u-s)^{\frac{\zeta }{\alpha}}}\right) +1 \right],
\end{equation}
so that, for $(j,k)=(0,1)$, (\ref{big-lemma-1}) reads $$\Vert \bar{p}_{\alpha} (u-s,x-\cdot) \nabla p_\alpha (t-u,y-\cdot) \Vert_{\B^{-\beta}_{p',q'}} \lesssim \bar{p}_\alpha (t-s,x-y) \mathfrak{L}(u,s,t,\zeta).$$

\subsection{Results in Besov spaces}\label{subsec-besov}

\begin{paragraph}{Thermic characterization of Besov spaces\\[0.5cm]}
From the work of Triebel \cite{Tri88}, we can use the following thermic characterization of Besov spaces (see also \cite{Stein70}, Chapter 5, paragraph 4):

\begin{propo}For $\vartheta \in \R, m\in (0,+\infty], \ell \in (0,\infty], \B_{\ell,m}^\vartheta (\R^d)= \left\{ f \in \mathcal{S}'(\R^d) : \Vert f\Vert_{\mathcal{H}_{\ell,m}^\vartheta,\tilde{\alpha}} < \infty\right\}$, where: 
\begin{align}\label{thermic-char}
 \Vert f\Vert_{\mathcal{H}_{\ell,m}^{\vartheta},\tilde{\alpha}} &:= \Vert \phi(D) f \Vert_{L^\ell} + \left\{ \begin{aligned} &\left( \int_0^1 \frac{\d v}{v} v^{(n-\frac{\vartheta}{\tilde{\alpha}})m} \Vert \partial_v^n p_{\tilde{\alpha}} (v,\cdot) \star f \Vert_{L^\ell}^m\right)^{\frac{1}{m}}, \qquad &m<\infty,\\
&\sup_{v\in (0,1]} v^{n-\frac{\vartheta}{\tilde{\alpha}}} \Vert \partial_v^n p_{\tilde{\alpha}}(v,\cdot) \star f \Vert_{L^\ell}, &m=\infty,
\end{aligned}  \right. \nonumber \\
 &=: \Vert \phi(D) f \Vert_{L^\ell} + \mathcal{T}_{\ell,m}^{\vartheta} [f].
\end{align}
\end{propo}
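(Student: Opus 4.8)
The plan is to deduce this thermic characterization from the standard Fourier‑analytic (Littlewood--Paley) definition of $\B_{\ell,m}^\vartheta$, following the classical argument of Triebel \cite{Tri88}; the only structural facts about the $\tilde\alpha$‑stable semigroup that enter are that its symbol $e^{-v|\xi|^{\tilde\alpha}}$ is radial, smooth away from the origin, rapidly decaying in $|\xi|$ together with all its derivatives, and that $p_{\tilde\alpha}$ obeys the scaling $p_{\tilde\alpha}(v,x)=v^{-d/\tilde\alpha}p_{\tilde\alpha}(1,v^{-1/\tilde\alpha}x)$. Fix once and for all a dyadic partition of unity $1=\hat\psi_0+\sum_{j\ge1}\hat\psi_j$ with $\hat\psi_j=\hat\psi_1(2^{-(j-1)}\cdot)$ supported in the annulus $\{2^{j-1}\le|\xi|\le2^{j+1}\}$, so that $\|f\|_{\B_{\ell,m}^\vartheta}\asymp\|\psi_0\star f\|_{L^\ell}+\big\|(2^{j\vartheta}\|\psi_j\star f\|_{L^\ell})_{j\ge1}\big\|_{\ell^m}$, and recall that $n$ is any integer with $n>\vartheta/\tilde\alpha$ (independence of the norm from the choice of $n$ is shown a posteriori). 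The proof splits into the two inequalities $\mathcal T_{\ell,m}^\vartheta[f]\lesssim\|f\|_{\B_{\ell,m}^\vartheta}$ and $\|f\|_{\B_{\ell,m}^\vartheta}\lesssim\|\phi(D)f\|_{L^\ell}+\mathcal T_{\ell,m}^\vartheta[f]$.

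For the first inequality, decompose $\partial_v^n p_{\tilde\alpha}(v,\cdot)\star f=\sum_{j\ge0}\partial_v^n p_{\tilde\alpha}(v,\cdot)\star\psi_j\star f$. On the $j$‑th annulus the symbol of $\partial_v^n p_{\tilde\alpha}(v,\cdot)$ is $(-|\xi|^{\tilde\alpha})^n e^{-v|\xi|^{\tilde\alpha}}$, of size $\lesssim2^{jn\tilde\alpha}e^{-c\,v2^{j\tilde\alpha}}$ with Mikhlin--Hörmander seminorms controlled by the same quantity after the natural rescaling, so $\|\partial_v^n p_{\tilde\alpha}(v,\cdot)\star\psi_j\star f\|_{L^\ell}\lesssim2^{jn\tilde\alpha}e^{-c\,v2^{j\tilde\alpha}}\|\psi_j\star f\|_{L^\ell}$ (for $\ell<1$ one uses the $L^\ell$‑multiplier/Peetre maximal function estimate, which costs only finitely many extra derivatives and is harmless since $\hat\psi_1\in C_0^\infty$). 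Multiplying by $v^{n-\vartheta/\tilde\alpha}$ turns the prefactor into $(v2^{j\tilde\alpha})^{n-\vartheta/\tilde\alpha}e^{-c\,v2^{j\tilde\alpha}}\,2^{j\vartheta}$, and since $n-\vartheta/\tilde\alpha>0$ the profile $\tau\mapsto\tau^{n-\vartheta/\tilde\alpha}e^{-c\tau}$ lies in $L^m((0,\infty),d\tau/\tau)$ with norm independent of $j$. Taking the $L^m(dv/v)$ norm and summing over $j$ by a Hardy‑type inequality (convolution in the logarithmic variable $\log v$) gives $\mathcal T_{\ell,m}^\vartheta[f]\lesssim\big\|(2^{j\vartheta}\|\psi_j\star f\|_{L^\ell})_j\big\|_{\ell^m}$ plus the $\psi_0$‑contribution, which is absorbed into $\|\phi(D)f\|_{L^\ell}$ once $\phi\in C_0^\infty$ is chosen with $\phi\equiv1$ on $\mathrm{supp}\,\hat\psi_0$.

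For the converse inequality, reconstruct the Littlewood--Paley blocks from the semigroup: for $v$ of size $2^{-j\tilde\alpha}$ the symbol $(-|\xi|^{\tilde\alpha})^n e^{-v|\xi|^{\tilde\alpha}}$ is bounded below, $\gtrsim2^{jn\tilde\alpha}$, on $\mathrm{supp}\,\hat\psi_j$, so $\hat\psi_j(\xi)/[(-|\xi|^{\tilde\alpha})^n e^{-v|\xi|^{\tilde\alpha}}]$ is, after the natural dilation, a uniformly bounded $L^\ell$‑multiplier; this yields $\|\psi_j\star f\|_{L^\ell}\lesssim2^{-jn\tilde\alpha}\sup_{v\asymp2^{-j\tilde\alpha}}\|\partial_v^n p_{\tilde\alpha}(v,\cdot)\star f\|_{L^\ell}$, hence $2^{j\vartheta}\|\psi_j\star f\|_{L^\ell}\lesssim\sup_{v\asymp2^{-j\tilde\alpha}}v^{n-\vartheta/\tilde\alpha}\|\partial_v^n p_{\tilde\alpha}(v,\cdot)\star f\|_{L^\ell}$. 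Since $\int_{2^{-j\tilde\alpha}}^{2^{-(j-1)\tilde\alpha}}dv/v\asymp1$ (and the quantity varies slowly in $\log v$, so the supremum may be replaced by an average), the $\ell^m_j$‑norm of the right‑hand side is $\lesssim\mathcal T_{\ell,m}^\vartheta[f]$, while $\|\psi_0\star f\|_{L^\ell}\lesssim\|\phi(D)f\|_{L^\ell}$ as before. Combining the two bounds gives the equivalence of norms; independence of the definition from $n$ then follows since for admissible $n_1<n_2$ the symbols differ by the multiplier $(-|\xi|^{\tilde\alpha})^{n_1-n_2}$, which is a bounded $L^\ell$‑multiplier on each relevant frequency range.

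The main obstacle, and the only genuinely delicate point, is the bookkeeping in passing between the discrete $\ell^m(\N)$ sums of the Littlewood--Paley definition and the continuous $L^m((0,1],dv/v)$ integrals of the thermic norm: in particular, handling the endpoint $v\to1$ (resolved by truncating the integral at $v=1$ and adjoining the single low‑frequency term $\|\phi(D)f\|_{L^\ell}$, since for $v\gtrsim1$ the semigroup probes only a bounded range of frequencies), making the Hardy/Young‑type summation rigorous when $m<1$, and — if one wants the full range $\ell\in(0,\infty]$ — invoking the correct $L^\ell$‑multiplier theorem for $\ell\le1$ via Peetre maximal functions. None of this is specific to the stable kernel: replacing $|\xi|^{\tilde\alpha}$ by $|\xi|^2$ recovers verbatim the Gauss--Weierstrass (caloric) characterization of \cite{Tri88}.
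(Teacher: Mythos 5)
The paper does not prove this proposition; it is cited directly from Triebel \cite{Tri88} and taken as a known result, so there is no in-paper proof to compare your argument against. Your sketch is a correct reconstruction of the classical Fourier-analytic argument behind that reference, and you identify the right structural steps: the annular multiplier bound $\lvert(-|\xi|^{\tilde\alpha})^n e^{-v|\xi|^{\tilde\alpha}}\rvert\lesssim 2^{jn\tilde\alpha}e^{-c v 2^{j\tilde\alpha}}$ on $\mathrm{supp}\,\hat\psi_j$, the Young-type summation in the logarithmic variable after rescaling by $v^{n-\vartheta/\tilde\alpha}$ (which is exactly where $n>\vartheta/\tilde\alpha$ is needed), the reverse bound by dividing out the non-vanishing symbol at scale $v\asymp 2^{-j\tilde\alpha}$, the truncation at $v=1$ with the low-frequency block absorbed into $\|\phi(D)f\|_{L^\ell}$, and the Peetre maximal function route for $\ell\le 1$. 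The independence of $n$ you deduce at the end is also stated correctly.

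One point worth flagging is that your structural hypotheses describe the symbol as the radial $e^{-v|\xi|^{\tilde\alpha}}$, i.e.\ the isotropic stable semigroup, whereas the paper subsequently instantiates $\tilde\alpha=\alpha$ with $p_\alpha$ the density of its (generally non-isotropic) driving noise $Z$, whose L\'evy symbol $\psi(\xi)=\int_{\mathbb{S}^{d-1}}|\xi\cdot\theta|^\alpha\mu(\d\theta)$ is $\alpha$-homogeneous and two-sided comparable to $|\xi|^\alpha$ but not radial. Your argument only ever uses homogeneity, non-degeneracy, and smoothness of the symbol away from the origin, so it goes through verbatim once ``radial'' is weakened to these properties; the remark is cosmetic but should be made to match the paper's actual use of the characterization.
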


We call $\mathcal{T}_{\ell,m}^{\vartheta} [f]$ the thermic part due to it involving a convolution with a heat kernel $p_{\tilde{\alpha}}$. By default, $\Vert \phi(D) f \Vert_{L^\ell}$ will thus be denoted by ``non-thermic part". Here, the choice of $\phi$, $\tilde{\alpha}$ and $n$ are free so long as they satisfy $n-\frac{\vartheta}{\tilde{\alpha}}>0$.\\

\textbf{Choice of $\tilde{\alpha}$:} Triebel suggested using this characterization with Poisson ($\tilde{\alpha} =1$) kernel or Gaussian ($\tilde{\alpha}=2$) kernel, but \cite{Tri88} also includes any $\tilde{\alpha} \in (1,2)$. In the following, we will use this with $\tilde{\alpha}= \alpha$. This characterization is very convenient when working with Besov norms of stable densities (or, for that matter, the approximations $\bar{p}_\alpha$ of such densities) because of the convolution properties they enjoy.\\

\textbf{Choice of $\phi$:} as mentioned, since we will be applying this characterization with densities, it would be wise to choose $\phi$ so that the $L^\ell$ part is not too dissimilar to the $L^\ell$ norm of the thermic part. In fact, the choice of $\phi$ is not so critical, because the most delicate part is the thermic part, where time singularities appear. We will work with $\phi \in \mathcal{S}$ s.t. $\phi(0)\neq 0$.\\

\textbf{Choice of $n$:} in our case, $\vartheta / \alpha <1$, so any integer $n$ will work. We will see that in our computations, taking $n=1$ allows to use cancellation arguments and is sufficient to ensure the thermic part in (\ref{thermic-char}) is a convergent integral.
\end{paragraph}

\begin{paragraph}{Useful inequalities in Besov spaces\\}
\begin{itemize}
\item From \cite{LR02}, Proposition 3.6, we have the following duality inequality:\\
$\forall m,\ell,\vartheta$, with $m'$ and $\ell'$ respective conjugates of $m$ and $\ell$, and $(f,g)\in \B_{\ell,m}^\vartheta \times \B_{\ell',m'}^{-\vartheta}$,

\begin{equation}\label{dual-ineq}
\left| \int f(y)g(y) \mathrm{d}y \right| \leq \Vert f \Vert_{\B_{\ell,m}^\vartheta} \Vert g \Vert_{\B_{\ell',m'}^{-\vartheta}}.
\end{equation}

\item From \cite{Sawano18}, Theorem 4.37, we have the following Hölder inequality (product rule):\\
$\forall p,q,s$ and $\forall \rho > \max \left\{ s,d\left(\frac{1}{p}-1 \right)_+ - s\right\}$, $\forall (f,g)\in \B_{\infty,\infty}^\rho \times \B_{p,q}^s$,
\begin{equation}\label{holder-besov}
\Vert f\cdot g\Vert_{ B_{p,q}^s} \lesssim \Vert f \Vert_{\B_{\infty,\infty}^\rho} \Vert g \Vert_{ \B_{p,q}^s}.
\end{equation}
In our setting, as $p\geq 1$ and $\beta<0$, the above condition on $\rho$ reads $\rho>-\beta$.
\end{itemize}
\end{paragraph}
As explained before, our approach consists in approximating the drift with a sequence of smooth functions on which to perform computations. In the next proposition, we explicit how such approximation works: the sole sensitive case is when $r=+\infty$. Namely, we can state the following, which is proved in \cite{CdRJM22} (see also \cite{IJ20} for drifts in $L^\infty ([0,T],H_q^{\beta})$) :
\begin{propo}\label{approx-lemma}[Smooth approximation of the drift] Let $b\in L^r -\B^\beta_{p,q}$ with $\beta \in (-1,0]$, $1\leq p,q\leq \infty$. There exists a time-space sequence of smooth bounded functions $(b^m)_{m\in \N}$ such that
	$$\Vert b-b^m\Vert_{L^{\tilde{r}} -\B^{\tilde{\beta}}_{p,q}} \underset{m \rightarrow \infty}{\longrightarrow} 0,\qquad \forall \tilde{\beta} < \beta,$$
	with $\tilde{r} = r$ if $r<\infty$ and for any $\tilde{r}<\infty$ otherwise. Moreover, there exists $\kappa \geq 1:$
	\begin{equation}\label{maj-bm}
		\sup_{m\in \N} \Vert b^m \Vert_{L^{\tilde{r}} -\B^\beta_{p,q}} \leq \kappa \Vert b \Vert_{L^{\tilde{r}} -\B^\beta_{p,q}}.
	\end{equation}
	
\end{propo}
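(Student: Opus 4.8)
The plan is to obtain $b^m$ by a standard space--time mollification of $b$ (extended by $0$ outside $[0,T]$), and to make the \emph{spatial} mollification converge \emph{quantitatively} in a Besov scale strictly below $\beta$, while the \emph{temporal} mollification is handled by a soft translation-continuity argument. Concretely, fix nonnegative $\chi^{(t)}\in C_0^\infty(\R)$ and $\chi^{(x)}\in C_0^\infty(\R^d)$ with $\int\chi^{(t)}=\int\chi^{(x)}=1$, set $\chi^{(t)}_m(s)=m\chi^{(t)}(ms)$, $\chi^{(x)}_m(y)=m^d\chi^{(x)}(my)$, write $\ast_t$ for convolution in the time variable and $\star$ for spatial convolution, extend $b$ by $0$ to $\R\times\R^d$, and define $b^m:=\chi^{(t)}_m\ast_t(\chi^{(x)}_m\star b)$, restricted back to $[0,T]\times\R^d$ at the end.

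I would first check $b^m\in C_b^\infty$. Smoothness is immediate by differentiating under the integral sign. For boundedness of $b^m$ together with all its derivatives, one writes, for fixed $(t,x)$, $D^k b^m(t,x)=\int\big(\int b(s,y)\,D^k\chi^{(t)}_m(t-s)\,\chi^{(x)}_m(x-y)\,\d y\big)\d s$ and applies the duality inequality \eqref{dual-ineq} in space and Hölder's inequality in time, bounding it by $\|b\|_{L^r-\B^\beta_{p,q}}$ times a ($m$-dependent but finite) constant. The uniform bound \eqref{maj-bm} then amounts to the fact that $f\mapsto\chi^{(x)}_m\star f$ is a contraction on $\B^\beta_{p,q}$ (Young's inequality: $\|\chi^{(x)}_m\star f\|_{\B^\beta_{p,q}}\le\|\chi^{(x)}_m\|_{L^1}\|f\|_{\B^\beta_{p,q}}=\|f\|_{\B^\beta_{p,q}}$) and that $g\mapsto\chi^{(t)}_m\ast_t g$ is a contraction on $L^{\tilde r}(\R)$; since the zero extension does not change the $L^{\tilde r}([0,T])$-$\B^\beta_{p,q}$ norm and restriction only decreases it — using, when $r=\infty$, that $L^\infty([0,T])\hookrightarrow L^{\tilde r}([0,T])$ for any $\tilde r<\infty$ — one gets \eqref{maj-bm} with $\kappa=1$.

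The core is the convergence. Fix $\tilde\beta<\beta$ and split $b^m-b=\big(\chi^{(t)}_m\ast_t(\chi^{(x)}_m\star b)-\chi^{(t)}_m\ast_t b\big)+\big(\chi^{(t)}_m\ast_t b-b\big)$. The second, purely temporal, term is handled by continuity of translations in $L^{\tilde r}([0,T])$ — valid precisely because $\tilde r<\infty$ — giving $\|\chi^{(t)}_m\ast_t b-b\|_{L^{\tilde r}-\B^\beta_{p,q}}\to 0$, which controls the $\B^{\tilde\beta}_{p,q}$ norm via the embedding $\B^\beta_{p,q}\hookrightarrow\B^{\tilde\beta}_{p,q}$. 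For the first term I would run a Littlewood--Paley estimate on the dyadic block of scale $2^j$: the multiplier $\widehat{\chi^{(x)}}(\cdot/m)-1$ contributes a kernel of $L^1$-norm $\lesssim\min(2^j/m,1)$, so for any $\beta'\in(\beta-1,\beta)$ one obtains $\|\chi^{(x)}_m\star h-h\|_{\B^{\beta'}_{p,q}}\lesssim\big(\sup_j 2^{j(\beta'-\beta)}\min(2^j/m,1)\big)\|h\|_{\B^\beta_{p,q}}\lesssim m^{\beta'-\beta}\|h\|_{\B^\beta_{p,q}}$. Applying this with $h=(\chi^{(t)}_m\ast_t b)(s,\cdot)$ and $\beta'=\tilde\beta$ (first using $\B^{\beta'}_{p,q}\hookrightarrow\B^{\tilde\beta}_{p,q}$ if $\tilde\beta\le\beta-1$), then integrating in $s$ against the uniform bound of the previous paragraph, yields $\lesssim m^{\tilde\beta-\beta}\|b\|_{L^{\tilde r}-\B^\beta_{p,q}}\to 0$. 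Combining the two pieces gives $\|b-b^m\|_{L^{\tilde r}-\B^{\tilde\beta}_{p,q}}\to 0$.

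The only genuinely delicate points — and the reason for the slight loss of regularity and for the finite-integrability requirement — are exactly the two just addressed: when $p$ or $q$ equals $\infty$, mollification does not converge in $\B^\beta_{p,q}$ itself (the closure of smooth functions is a proper subspace), which forces $\tilde\beta<\beta$ and is recovered through the quantitative high-frequency gain $m^{\tilde\beta-\beta}$; and when $r=\infty$, translation is not continuous in $L^\infty$, which forces passing to $L^{\tilde r}$ with $\tilde r<\infty$. Everything else is a routine use of Young's inequality, the duality inequality \eqref{dual-ineq}, and the (thermic or Littlewood--Paley) characterization of Besov norms; the complete details are carried out in \cite{CdRJM22} (see also \cite{IJ20}).
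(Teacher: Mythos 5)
The paper does not prove Proposition~\ref{approx-lemma} itself; it defers entirely to \cite{CdRJM22} (and, for Sobolev-type drifts, to \cite{IJ20}), so there is no in-text argument to compare yours against. Your sketch is correct and captures exactly the standard mechanism one expects here: space--time mollification of the zero extension, a quantitative Littlewood--Paley estimate $\Vert\chi^{(x)}_m\star h-h\Vert_{\B^{\tilde\beta}_{p,q}}\lesssim m^{\tilde\beta-\beta}\Vert h\Vert_{\B^\beta_{p,q}}$ (valid for $\tilde\beta\in(\beta-1,\beta)$, with an extra embedding for lower $\tilde\beta$) controlling the spatial error at any \emph{strictly} lower regularity --- the loss $\tilde\beta<\beta$ being forced, as you note, by the non-density of smooth functions in $\B^\beta_{p,q}$ when $p=\infty$ or $q=\infty$ --- and translation-continuity in $L^{\tilde r}$ for the temporal error, which is what forces $\tilde r<\infty$. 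Two cosmetic caveats: the claim $\kappa=1$ depends on which equivalent norm one fixes on $\B^\beta_{p,q}$ (the paper works with the thermic characterization, which is only equivalent to the Littlewood--Paley one up to constants), and when $r=\infty$ the embedding $L^\infty([0,T])\hookrightarrow L^{\tilde r}([0,T])$ you mention carries a factor $T^{1/\tilde r}$; neither affects the statement, which only requires some $\kappa\geq 1$.
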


Note that this approximation induces a slight loss in space regularity. For this paper, this is of no concern as Theorem \ref{thm-main} does not hold for $\rho = \gamma-\beta$.

\section{Estimates on the mollified SDE}\label{sec-estimates}
In this section, we only consider the \textit{mollified} SDE

\begin{equation}\label{mol-sde}
	\d X_t^m = b^m (t,X_t^m) \d t + \d Z_t,
\end{equation}
where $(b^m)_{m\in \N} \in C_b^\infty$ is an approximating sequence of the drift, as given by Proposition \ref{approx-lemma}. As thus, this SDE is a classical one, and we have strong well-posedness and uniqueness. In this setting, it is known that the density of $(X_t^m)_{t\geq s}$ exists for $t>s$ (see e.g. \cite{Kol00} or \cite{Le85} for a more general additive noise). We will prove the following theorem:

\begin{theorem}\label{mol-thm} Fix the parameters $T>0$ and $\Theta = \{\alpha,d,\beta,r,p,q,\Vert b \Vert_{L^{r} -\B^\beta_{p,q}}\}$. Assume  \textbf{(\ref{GR})} holds. For any $m$, consider the solution $\mathbb{P}^m$ to the martingale problem associated with $(b^m,\mathcal{L}^\alpha,x)$ starting at time $s$ and denote $(x_t^m)_{t\in {[s,T]}}$ the associated canonical process. For all $t\in (s,T]$, $x_t^m$ admits a density $p^m(s,t,x,\cdot)$ such that there exists $C:=C(T,\Theta,\rho) \geq 1$ such that for all $(x,y)\in \R^d,$
	\begin{align}
		C^{-1} \bar{p}_\alpha (t-s,y-x) \leq p^m(s,t,x,y) &\leq C \bar{p}_\alpha (t-s,y-x), \label{mol-thm-1}\\
		\left| \nabla_x p^m(s,t,x,y) \right| &\leq \frac{C}{(t-s)^{\frac{1}{\alpha}}} \bar{p}_\alpha (t-s,y-x), \label{mol-thm-2}\\
		\forall (y,y')\in \R^d, \qquad \left| p^m(s,t,x,y) - p^m(s,t,x,y')\right| &\leq \frac{C |y-y'|^\rho}{(t-s)^{\frac{\rho}{\alpha}}} \left( \bar{p}_\alpha (t-s,y-x) +\bar{p}_\alpha (t-s,y'-x)\right), \label{mol-thm-3}\\
		\forall (y,y')\in \R^d, \qquad \left|\nabla_x p^m(s,t,x,y) - \nabla_x p^m(s,t,x,y')\right| &\leq \frac{C |y-y'|^\rho}{(t-s)^{\frac{\rho +1}{\alpha}}} \left( \bar{p}_\alpha (t-s,x-y) +\bar{p}_\alpha (t-s,x-y')\right), \label{mol-thm-4}
		\end{align}
	for any $\rho \in (-\beta,\gamma-\beta)$, where $\gamma:= \beta - \frac{1-\alpha + \frac{\alpha}{r} + \frac{d}{p}}{2}$ is the ``gap to singularity".\\
	Note that, in the current mollified setting, $X^m_t=x_t^m$ is the strong solution to \eqref{mol-sde}.
\end{theorem}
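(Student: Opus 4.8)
\textbf{Proof strategy for Theorem \ref{mol-thm}.}

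The plan is to set up a Duhamel-type (parametrix) representation for the density $p^m(s,t,x,y)$ of the mollified SDE \eqref{mol-sde}, using the stable heat kernel $p_\alpha$ as the frozen proxy, and then to close the estimates via a normalization (bootstrap) argument. Concretely, since $b^m$ is smooth and bounded, $p^m$ solves the forward Kolmogorov equation, and writing $p^m = p_\alpha + (p^m - p_\alpha)$ and integrating against the drift term yields the Duhamel formula
\begin{equation*}
	p^m(s,t,x,y) = p_\alpha(t-s,y-x) + \int_s^t \int_{\R^d} p^m(s,u,x,z)\, b^m(u,z)\cdot \nabla_z p_\alpha(t-u,y-z)\, \d z\, \d u,
\end{equation*}
where the inner spatial integral is to be read as a Besov duality pairing $\langle b^m(u,\cdot),\ p^m(s,u,x,\cdot)\nabla p_\alpha(t-u,y-\cdot)\rangle$ between $\B^\beta_{p,q}$ and $\B^{-\beta}_{p',q'}$. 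First I would fix the a priori class of bounds: I look for the smallest constant $C$ (depending on a time horizon that I will shrink if necessary) such that the four bounds \eqref{mol-thm-1}--\eqref{mol-thm-4} hold with $C$ on $[s,T]$, the point being that for the smooth mollified equation all four quantities are \emph{finite} and continuous, so such a minimal $C=C_m$ exists on a short interval; the whole game is to show $C_m$ can be taken independent of $m$ and of the interval length up to $T$.

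The core of the argument is the normalization method of \cite{MPZ21, JM21, MZ22}: I insert the assumed upper bound $p^m(s,u,x,z)\le C\,\bar p_\alpha(u-s,z-x)$ into the Duhamel term, and estimate the resulting expression via the Besov duality inequality \eqref{dual-ineq} together with the product/Hölder rule \eqref{holder-besov}, using that $b^m$ is uniformly bounded in $L^r$--$\B^\beta_{p,q}$ by \eqref{maj-bm}. The key technical input is Lemma \ref{big-lemma}: the Besov norm $\Vert \bar p_\alpha(u-s,x-\cdot)\nabla p_\alpha(t-u,y-\cdot)\Vert_{\B^{-\beta}_{p',q'}}$ is controlled by $\bar p_\alpha(t-s,x-y)\,\mathfrak L(u,s,t,\zeta)$, and by \eqref{theta}/\eqref{GR} the time singularity $\mathfrak L$ is integrable in $u$ over $[s,t]$ with the residual factor $(t-s)$ to a \emph{positive} power $\theta/\alpha$ (up to $\eps$-losses). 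That positive power is what makes the bootstrap close: one gets $p^m \le \bar p_\alpha(t-s,y-x)\,[\,1 + C\,(t-s)^{\theta/\alpha}\cdot(\text{const})\,]$, so on a sufficiently small interval the factor in brackets is $\le$ a fixed constant, and $C_m$ does not blow up; iterating over finitely many such intervals (using the Chapman–Kolmogorov / reconstruction relation $p^m(s,t,x,y)=\int p^m(s,u,x,z)p^m(u,t,z,y)\d z$ and the convolution bound \eqref{p-q-convo}) propagates the bound to all of $[0,T]$ with a constant depending only on $T$ and $\Vert b\Vert_{L^r-\B^\beta_{p,q}}$. The lower bound is obtained in the standard parametrix way: the same Duhamel expansion shows $p^m$ is close to $p_\alpha$ in the diagonal regime $|y-x|\lesssim(t-s)^{1/\alpha}$ for short times (the correction being $O((t-s)^{\theta/\alpha})$ smaller), giving a diagonal lower bound, which is then spread to the full off-diagonal regime via Chapman–Kolmogorov and a chaining argument over $\lceil (t-s)/\delta\rceil$ steps, exactly as in \cite{Kol00, MZ22}.

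For the gradient bound \eqref{mol-thm-2} I differentiate the Duhamel formula in $x$: the term $\nabla_x p_\alpha(t-s,y-x)$ contributes the desired $(t-s)^{-1/\alpha}\bar p_\alpha$ by \eqref{space-time-derivatives-bounds}, and in the Duhamel integral I use the assumed gradient bound on $p^m(s,u,x,\cdot)$ together with the $(j,k)=(1,1)$ case of \eqref{big-lemma-1} (equivalently $\mathfrak L$ carries an extra $(u-s)^{-1/\alpha}$); the combined singularity $(u-s)^{-1/\alpha}\cdot\mathfrak L$ is still integrable near $u=s$ because $\beta>(1-\alpha+d/p+\alpha/r)/2$ is \emph{strictly} more than what is needed, leaving room $\gamma>0$. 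The two Hölder-continuity estimates \eqref{mol-thm-3}--\eqref{mol-thm-4} in the $y$ variable follow the same template, now using Lemma \ref{holder-noise} (the Taylor–Laplace bounds \eqref{taylor-stable-density}--\eqref{taylor-approx-density}) for the free term and the second part \eqref{big-lemma-2} of Lemma \ref{big-lemma} for the Duhamel term, with the Hölder exponent $\rho\in(-\beta,\gamma-\beta)$ chosen so that the extra $|y-y'|^\rho(t-u)^{-\rho/\alpha}$ factor still leaves an integrable (indeed positive-power) time singularity; this is precisely where the range of $\rho$ comes from, and why $\rho=\gamma-\beta$ is excluded. Finally, the identification that the canonical process under $\mathbb P^m$ is the strong solution of \eqref{mol-sde} with this density is classical for smooth bounded drift (uniqueness of the martingale problem, e.g. \cite{Kol00, Le85}).

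\textbf{Main obstacle.} The delicate point is not any single estimate but making the bootstrap genuinely self-contained: one must verify that every use of the \emph{a priori} bounds \eqref{mol-thm-1}--\eqref{mol-thm-4} inside the Duhamel term produces, after applying Lemma \ref{big-lemma}, a time singularity that is integrable \emph{and} comes with a strictly positive power of the interval length, uniformly across all four estimates simultaneously (the four bounds are coupled, since the Duhamel term for $\nabla_x p^m$ feeds on the bound for $\nabla_x p^m$, etc.). Keeping track of the $\eps$-losses from the thermic characterization (the spurious $(t-s)^{\zeta/\alpha}[(t-u)^{-\zeta/\alpha}+(u-s)^{-\zeta/\alpha}]$ term in Lemma \ref{big-lemma}), and checking these never destroy integrability — which is exactly where the strict inequality in \eqref{GR} and the positivity of $\gamma$ are consumed — is the crux of the proof. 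The off-diagonal-to-diagonal transfer of constants via Chapman–Kolmogorov with the $L^{\ell'}$ convolution loss \eqref{p-q-convo} is the other place where care is needed, since naive iteration would accumulate a number of constants growing with $T/\delta$; this is handled by the usual observation that the per-step correction is a genuinely small power $(t-s)^{\theta/\alpha}$.
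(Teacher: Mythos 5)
Your high-level plan matches the paper's (Duhamel expansion, Besov duality \eqref{dual-ineq} with the product rule \eqref{holder-besov}, Lemma~\ref{big-lemma} for the kernel Besov norm, separate treatment of the gradient and $y$-H\"older estimates via the $(j,k)=(1,1)$ case of \eqref{big-lemma-1} and via \eqref{big-lemma-2}/Lemma~\ref{holder-noise}). However, there is a genuine gap in how you propose to close the bootstrap. You write that you ``insert the assumed upper bound $p^m(s,u,x,z)\le C\,\bar p_\alpha(u-s,z-x)$ into the Duhamel term.'' Since $b^m$ must be measured in $\B^\beta_{p,q}$ with $\beta<0$ (uniformly in $m$), the spatial integral is a distributional pairing, and a pointwise upper bound cannot be inserted into it. After normalizing and using duality plus the product rule, what actually appears is
\begin{equation*}
\Vert h_{s,x}(u,\cdot)\Vert_{\B^\rho_{\infty,\infty}}\,\Vert b^m(u,\cdot)\Vert_{\B^\beta_{p,q}}\,\bigl\Vert \bar p_\alpha(u-s,\cdot-x)\nabla p_\alpha(t-u,y-\cdot)\bigr\Vert_{\B^{-\beta}_{p',q'}},\qquad h_{s,x}:=p^m/\bar p_\alpha,
\end{equation*}
so the quantity that must be bootstrapped is the \emph{positive-regularity Besov norm} $\Vert h_{s,x}(u,\cdot)\Vert_{\B^\rho_{\infty,\infty}}$ (with $\rho>-\beta$, forced by \eqref{holder-besov}), not merely $\Vert h_{s,x}(u,\cdot)\Vert_{L^\infty}$. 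Concretely, the pointwise bound \eqref{mol-thm-1} and the $y$-H\"older bound \eqref{mol-thm-3} are not independent estimates that happen to ``feed into each other''; you cannot prove \eqref{mol-thm-1} alone, because the Duhamel term for $p^m$ already consumes \eqref{mol-thm-3}. This is the deeper coupling you only touch on in passing.

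What your proposal is missing is a mechanism to control the thermic part $\mathcal T^\rho_{\infty,\infty}[h_{s,x}(t,\cdot)]$ of this Besov norm. That part is \emph{not} homogeneous to $\Vert h_{s,x}(t,\cdot)\Vert_{L^\infty}$: it carries an intrinsic singularity $(t-s)^{-\rho/\alpha}$ (see Lemma~\ref{thermic-h}), so one cannot simply run a Gronwall loop on $\Vert h_{s,x}(t,\cdot)\Vert_{\B^\rho_{\infty,\infty}}$. The paper resolves this by introducing the normalized quantity
\begin{equation*}
g(s,x,t)=\Vert h_{s,x}(t,\cdot)\Vert_{L^\infty}+(t-s)^{\rho/\alpha}\,\mathcal T^\rho_{\infty,\infty}[h_{s,x}(t,\cdot)],
\end{equation*}
which is designed so that $(u-s)^{\rho/\alpha}\Vert h_{s,x}(u,\cdot)\Vert_{\B^\rho_{\infty,\infty}}\le g(s,x,u)$, and applies a Gronwall--Volterra lemma directly to $g$ on $T<1$. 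The integrability check (your change of variables in time) then produces exactly the constraint $\rho<\gamma-\beta$, and the lower bound falls out because the Duhamel correction to $h_{s,x}$ is $o((t-s)^{\rho/\alpha})$ once $g$ is bounded --- no chaining via Chapman--Kolmogorov is needed for the lower bound within $T<1$; Chapman--Kolmogorov is used only once, to pass from $T\in(0,1)$ to general $T$. Your ``minimal constant on short intervals plus iteration'' framing can be made to work, but only after the above correction: the object you propagate must be the combined normalized quantity $g$ (equivalently, \eqref{mol-thm-1} and \eqref{mol-thm-3} jointly), and the per-step Gronwall gain must be extracted from the positive power $\gamma>0$ left over after the $\mathfrak L$ integration.
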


\begin{remark} We insist that, in the context of Proposition \ref{approx-lemma}, this statement is uniform in $m$ as $C$ does not depend on $m$. We will see in the proof that this is made possible by \eqref{maj-bm}. We could in fact already obtain those bounds from \cite{Kol00}, but they would not be uniform in $m$.
	
\end{remark}
\begin{remark}\label{beta-tilde} As previously mentioned, when using Proposition \ref{approx-lemma}, the approximating sequence does not exactly have the same regularity and integrability indexes as the original drift. However, we chose to state Theorem \ref{mol-thm} with the original $r$ and $\beta$ because, as we do not obtain large inequalities but strict ones ($\rho \in (-\beta,\gamma - \beta)$), the final result remains the same. 
	
\end{remark}

\begin{proof} We will prove Theorem \ref{mol-thm} for $T\in (0,1)$. To extend this proof to any $T>0$, it suffices to use the Chapman-Kolmogorov property of $p_\alpha$.\\

As equation (\ref{mol-sde}) can be understood in a classical way, we can perform a Duhamel expansion on the density of the solution (see e.g. \cite{MZ22}). Namely, $\forall 0\leq s < t \leq T, \forall (x,y) \in \R^d$,
\begin{equation}\label{duhamel}
p^m(s,t,x,y) = p_\alpha (t-s,y-x)+\int_s^t \int p^m (s,u,x,z) b^m (u,z) \nabla_z p_\alpha (t-u,y-z) \d z \d u.
\end{equation}
Let us now denote, for fixed $(s,x)\in [0,1]\times \R^d$, $$h_{s,x}^m(t,y) :=  \frac{p^m (s,t,x,y)}{\bar{p}_{\alpha} (t-s,y-x)}.$$
\textbf{For the sake of clarity, until the end of this section we will omit the $m$ in the previous $h_{s,x}^m$, and denote $h_{s,x}=h_{s,x}^m$.\\}

We already know that $ p_\alpha \asymp  \bar{p}_\alpha$, hence we can write:
\begin{align*}
h_{s,x}(t,y) & \leq C + \frac{1}{\bar{p}_{\alpha} (t-s,y-x)} \left| \int_s^t \int \frac{p^m (s,u,x,z)}{\bar{p}_{\alpha} (u-s,z-x)} b^m (u,z) \bar{p}_{\alpha} (u-s,z-x) \nabla p_\alpha (t-u,y-z) \d z \d u \right|\\
& \lesssim 1 + \frac{1}{\bar{p}_{\alpha} (s,t,x,y)}  \int_s^t\left| \int h_{s,x}(u,z) b^m (u,z) \bar{p}_{\alpha} (s,u,x,z) \nabla p_\alpha (t-u,y-z) \d z \right| \d u .
\end{align*}
From this point, our goal is to use a Gronwall-Volterra lemma on this expansion. This will give us bounds on $h$, which we need to be uniform in $m$. In our case, we do not know much about $b^m$, and the most we might be able to rely on is that $\Vert b^m - b \Vert_{L^r-\B^{\beta}_{p,q}} \rightarrow 0$. On the flipside, we know a lot about $\bar{p}_\alpha$, $p_\alpha$ and their derivatives. In particular, it is very smooth, and we should be able to control its Besov norm rather well. Hence we will use the duality inequality (\ref{dual-ineq}) to derive:
\begin{align*}
h&_{s,x}(t,y) \lesssim 1 +\frac{1}{\bar{p}_{\alpha} (t-s,y-x)}\int_s^t   \Vert h_{s,x}(u,\cdot)  b^m (u,\cdot) \Vert_{\B^{\beta}_{p,q}} \Vert \bar{p}_{\alpha} (u-s,\cdot-x) \nabla p_\alpha (t-u,y-\cdot) \Vert_{\B^{-\beta}_{p',q'}}  \d u\\
& \lesssim  1 +\frac{1}{\bar{p}_{\alpha} (t-s,y-x)}\int_s^t  \Vert h_{s,x}(u,\cdot) \Vert_{\B^{\rho}_{\infty,\infty}} \Vert b^m (u,\cdot) \Vert_{\B^{\beta}_{p,q}} \Vert \bar{p}_{\alpha} (u-s,\cdot-x) \nabla p_\alpha (t-u,y-\cdot) \Vert_{\B^{-\beta}_{p',q'}} \d u,
\end{align*}
where the last inequality was obtained using (\ref{holder-besov}), with any $\rho > \max \left\{ \beta,d \left( \frac{1}{p}-1\right)_+ - \beta \right\} =  - \beta$. \\
Using Lemma \ref{big-lemma}, we get
\begin{align}\label{l-inf-bound}
h_{s,x}(t,y) \lesssim  1 +\int_s^t   \Vert h_{s,x}(u,\cdot) \Vert_{\B^{\rho}_{\infty,\infty}} \Vert b^m (u,\cdot) \Vert_{\B^{\beta}_{p,q}}  \mathfrak{L}(u,s,t,\rho)  \d u.
\end{align}
We now need to retrieve $\Vert h_{s,x}(u,\cdot) \Vert_{\B^{\rho}_{\infty,\infty}}$ on the l.h.s. to use a Gronwall-Volterra lemma.
\begin{align*}
\Vert h_{s,x}(t,\cdot) \Vert_{\B^{\rho}_{\infty,\infty}} &= \Vert h_{s,x}(t,\cdot) \Vert_{L^{\infty}} + \sup_{v\in (0,1]} v^{1-\frac{\rho}{\alpha}} \Vert \partial_v p_\alpha (v,\cdot) \star h_{s,x}(t,\cdot)\Vert_{L^\infty}\\
&= \Vert h_{s,x}(t,\cdot) \Vert_{L^{\infty}} + \mathcal{T}^\rho_{\infty,\infty}[h_{s,x}(t,\cdot)].
\end{align*}

The non-thermic part can already be estimated from (\ref{l-inf-bound}). For the thermic part, we introduce the following technical lemma, whose proof is postponed to Section \ref{sec-proof}:

\begin{lemma}[Thermic part of $\Vert h_{s,x}(u,\cdot) \Vert_{\B^{\rho}_{\infty,\infty}}$]\label{thermic-h} $\forall 0\leq s \leq t,\forall x\in \R^d,\forall \rho \in (-\beta,1),$
	\begin{equation*}
		\mathcal{T}_{\infty,\infty}^\rho [h_{s,x}(t,\cdot)] \lesssim \frac{1}{(t-s)^{\frac{\rho}{\alpha}}} \left(1 + \int_s^t  \Vert h_{s,x}(u,\cdot) \Vert_{\B_{\infty,\infty}^\rho} \Vert b^m(u,\cdot) \Vert_{\B_{p,q}^\beta}  \mathfrak{L}(u,s,t,\rho) \left[\frac{(t-s)^{\frac{\rho}{\alpha}}}{(t-u)^{\frac{\rho}{\alpha}}} +1 \right]  \d u\right).
	\end{equation*}
\end{lemma}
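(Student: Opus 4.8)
The plan is to estimate $\mathcal{T}_{\infty,\infty}^\rho[h_{s,x}(t,\cdot)] = \sup_{v\in(0,1]} v^{1-\frac{\rho}{\alpha}}\Vert \partial_v p_\alpha(v,\cdot)\star h_{s,x}(t,\cdot)\Vert_{L^\infty}$ by plugging in the Duhamel formula \eqref{duhamel} for $p^m$. Writing $h_{s,x}(t,y) = p^m(s,t,x,y)/\bar p_\alpha(t-s,y-x)$, I would split into the free part and the convolution part. For the free part, $\partial_v p_\alpha(v,\cdot)\star\big[p_\alpha(t-s,\cdot-x)/\bar p_\alpha(t-s,\cdot-x)\big]$, the cancellation trick with $n=1$ comes into play: since $\int \partial_v p_\alpha(v,z)\,dz = \partial_v \int p_\alpha(v,z)\,dz = 0$, I can subtract a well-chosen constant (the value of the ratio at $y$, using $p_\alpha\asymp\bar p_\alpha$ and the Hölder-type regularity of $p_\alpha$ and $\bar p_\alpha$ from Lemma \ref{holder-noise}) and get a bound of order $v^{\rho/\alpha}(t-s)^{-\rho/\alpha}\cdot(\text{const})$ after using \eqref{spatial-moments}; the $v^{1-\rho/\alpha}$ prefactor then produces the $(t-s)^{-\rho/\alpha}$ on the right-hand side and the $1$ inside the parenthesis.

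For the convolution part, I would write it as
$$\frac{1}{\bar p_\alpha(t-s,y-x)}\int_s^t\!\int \partial_v p_\alpha(v,y-w)\,p^m(s,u,x,z)\,b^m(u,z)\,\nabla_z p_\alpha(t-u,z- w)\Big|_{\text{conv in }w}\,dz\,du$$
— more precisely, convolving the Duhamel kernel $z\mapsto \int_s^t\!\int p^m(s,u,x,z)b^m(u,z)\nabla_z p_\alpha(t-u,y-z)\,dz\,du$ against $\partial_v p_\alpha(v,\cdot)$ in the variable $y$. Inserting $h_{s,x}(u,z) = p^m(s,u,x,z)/\bar p_\alpha(u-s,z-x)$ and applying the duality inequality \eqref{dual-ineq} in the $z$ variable pairs $\Vert h_{s,x}(u,\cdot)b^m(u,\cdot)\Vert_{\B^\beta_{p,q}}$ against $\Vert \bar p_\alpha(u-s,\cdot-x)\,[\partial_v p_\alpha(v,\cdot)\star\nabla p_\alpha(t-u,y-\cdot)]\Vert_{\B^{-\beta}_{p',q'}}$, then \eqref{holder-besov} splits off $\Vert h_{s,x}(u,\cdot)\Vert_{\B^\rho_{\infty,\infty}}\Vert b^m(u,\cdot)\Vert_{\B^\beta_{p,q}}$. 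It remains to bound the Besov norm of $\bar p_\alpha(u-s,\cdot-x)$ times the convolved gradient kernel; here again the $n=1$ cancellation for $\partial_v p_\alpha(v,\cdot)$ lets me replace $\nabla p_\alpha(t-u,y-\cdot)$ by its increment, so that Lemma \ref{big-lemma} (essentially \eqref{big-lemma-1} with $(j,k)=(0,1)$, i.e. the quantity $\bar p_\alpha(t-s,x-y)\mathfrak L(u,s,t,\rho)$) applies at the scale $t-u$, picking up the extra factor $v^{\rho/\alpha}$ from the increment together with a $(t-u)^{-\rho/\alpha}$; multiplying by $v^{1-\rho/\alpha}$ and taking the sup over $v\in(0,1]$ (using $v\le 1$ and $v\le t-s$ when convenient) yields the claimed $\frac{1}{(t-s)^{\rho/\alpha}}\mathfrak L(u,s,t,\rho)\big[\frac{(t-s)^{\rho/\alpha}}{(t-u)^{\rho/\alpha}}+1\big]$ structure, and the $\bar p_\alpha(t-s,y-x)$ from Lemma \ref{big-lemma} cancels the normalizing denominator.

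The main obstacle I expect is the bookkeeping of time singularities: one must verify that after multiplying by $v^{1-\rho/\alpha}$ and taking the supremum over $v$, all exponents of $v$ stay nonnegative (so the sup is attained in a controlled way, not blowing up) while simultaneously the resulting $u$-integrand is integrable near $u=t$ and $u=s$ — this is exactly why the extra bracket $\big[\frac{(t-s)^{\rho/\alpha}}{(t-u)^{\rho/\alpha}}+1\big]$ appears and why the statement restricts to $\rho<1$ (and $\rho>-\beta$ is needed for \eqref{holder-besov} and for the thermic characterization exponent $1-\rho/\alpha>0$... here $\rho/\alpha<1$ suffices, but $\rho>-\beta$ is what makes the product rule legitimate). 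A secondary technical point is handling the two regimes $|y-x|\lessgtr (t-s)^{1/\alpha}$ when estimating the free term and when comparing $\bar p_\alpha(v,\cdot)$-convolutions against $\bar p_\alpha(t-s,y-x)$, which is routine given the remarks following Lemma \ref{holder-noise} but must be done carefully to avoid losing the diagonal decay. Everything else is a mechanical combination of \eqref{dual-ineq}, \eqref{holder-besov}, Lemma \ref{big-lemma} and \eqref{maj-bm}, exactly as in the derivation of \eqref{l-inf-bound}.
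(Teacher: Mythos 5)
Your toolbox is right (thermic characterization with $n=1$ cancellation, Duhamel expansion, duality \eqref{dual-ineq}, product rule \eqref{holder-besov}, Lemma \ref{big-lemma}), but there is a genuine gap in how you treat the normalizing denominator in the convolution term. You pull $\bar{p}_\alpha(t-s,\cdot-x)^{-1}$ out of the $\partial_v p_\alpha$-convolution as a constant evaluated at $y$; however $h_{s,x}(t,w)=p^m(s,t,x,w)/\bar{p}_\alpha(t-s,w-x)$, so the denominator depends on the integration variable $w$ and cannot be pulled out. Consequently, the object one must actually control after cancellation is not $\partial_v p_\alpha(v,\cdot)\star\nabla p_\alpha(t-u,y-\cdot)$ weighted by a fixed $\bar p_\alpha(t-s,y-x)^{-1}$, but rather (integrating against $\partial_v p_\alpha(v,y-w)$ the increment $h_{s,x}(t,w)-h_{s,x}(t,y)$) a quantity of the form
\[
\int_s^t\!\!\int h_{s,x}(u,z)\,b^m(u,z)\,\bar p_\alpha(u-s,z-x)\left[\frac{\nabla p_\alpha(t-u,w-z)}{\bar p_\alpha(t-s,w-x)}-\frac{\nabla p_\alpha(t-u,y-z)}{\bar p_\alpha(t-s,y-x)}\right]\!\d z\,\d u,
\]
i.e. an increment of the \emph{normalized} kernel $\nabla p_\alpha/\bar p_\alpha$. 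Bounding the Besov norm of this difference is precisely the content of \eqref{big-lemma-2}, which is tailored to this ratio increment; the estimate \eqref{big-lemma-1} you propose does not provide the needed $|w-y|^\rho(t-u)^{-\rho/\alpha}$ gain together with the correct interaction with the two denominators. So invoking \eqref{big-lemma-1} in place of \eqref{big-lemma-2} is where your argument breaks down.

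A secondary point: the relevant dichotomy is $|w-y|\lessgtr(t-s)^{1/\alpha}$ (the convolution variable against the observation time), not $|y-x|\lessgtr(t-s)^{1/\alpha}$. In the off-diagonal regime $|w-y|\ge(t-s)^{1/\alpha}$, the Taylor-Laplace Lemma \ref{holder-noise} is unavailable, and the paper circumvents the Duhamel decomposition altogether there: one simply bounds $|h_{s,x}(t,w)-h_{s,x}(t,y)|$ by $2\Vert h_{s,x}(t,\cdot)\Vert_{L^\infty}$, inserts the free factor $\frac{|w-y|^\rho}{(t-s)^{\rho/\alpha}}\ge 1$, and then invokes the $L^\infty$ bound \eqref{l-inf-bound} already established. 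This is what produces the ``$+1$'' inside the bracket $\big[\frac{(t-s)^{\rho/\alpha}}{(t-u)^{\rho/\alpha}}+1\big]$, while the diagonal regime (handled via $\Delta_1$ and $\Delta_2$, the latter via \eqref{big-lemma-2}) produces the $\frac{(t-s)^{\rho/\alpha}}{(t-u)^{\rho/\alpha}}$ term. Your write-up does not articulate either of these mechanisms.
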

This lemma indicates that the thermic part of $\Vert h_{s,x}(u,\cdot) \Vert_{\B^{\rho}_{\infty,\infty}}$ is not homogeneous to its non-thermic part. We therefore introduce a normalized version of $\Vert h_{s,x}(u,\cdot) \Vert_{\B^{\rho}_{\infty,\infty}}$ on which to perform a Gronwall-Volterra lemma, accounting for the right time singularity. Let
\begin{align}
	g(s,x,t):=g^m(s,x,t):=&\Vert h_{s,x}^m(t,\cdot) \Vert_{L^{\infty}} + (t-s)^{\frac{\rho}{\alpha}} \mathcal{T}^\rho_{\infty,\infty}[h_{s,x}^m(t,\cdot)] \nonumber\\
	=& \Vert h_{s,x}(t,\cdot) \Vert_{L^{\infty}} + (t-s)^{\frac{\rho}{\alpha}} \mathcal{T}^\rho_{\infty,\infty}[h_{s,x}(t,\cdot)].
\end{align}
With the previous lemma, we can write
\begin{align}\label{maj-g1-1}
	g(s,x,t) &\lesssim 1 + \int_s^t  \Vert h_{s,x}(u,\cdot) \Vert_{\B_{\infty,\infty}^\rho} \Vert b^m(u,\cdot) \Vert_{\B_{p,q}^\beta}  \mathfrak{L}(u,s,t,\rho) \left[\frac{(t-s)^{\frac{\rho}{\alpha}}}{(t-u)^{\frac{\rho}{\alpha}}} +1 \right] \d u .
\end{align}
Notice that, because $(u-s)\leq(t-s)\leq 1$,
\begin{align*}
	\Vert (u-s)^{\frac{\rho}{\alpha}} h_{s,x}(u,\cdot) \Vert_{\B^{\rho}_{\infty,\infty}} = \Vert (u-s)^{\frac{\rho}{\alpha}} h_{s,x}(u,\cdot) \Vert_{L^{\infty}} +(u-s)^{\frac{\rho}{\alpha}} \mathcal{T}^\rho_{\infty,\infty}[h_{s,x}(u,\cdot)] \leq g(s,x,u).
\end{align*}
Because of this, (\ref{maj-g1-1}) yields
\begin{align}
	g(s,x,t) &\lesssim 1 + \int_s^t \frac{g(s,x,u)}{(u-s)^{\frac{\rho}{\alpha}}} \Vert b^m(u,\cdot) \Vert_{\B_{p,q}^\beta}  \mathfrak{L}(u,s,t,\rho) \left[\frac{(t-s)^{\frac{\rho}{\alpha}}}{(t-u)^{\frac{\rho}{\alpha}}} +1 \right] \d u \nonumber.
\end{align}
We now apply a Gronwall-Volterra lemma:
\begin{align}\label{g-bound}
g(s,x,t)  \lesssim  1 +\int_s^t & \frac{\Vert b^m(u,\cdot) \Vert_{\B_{p,q}^\beta}}{(u-s)^{\frac{\rho}{\alpha}}}   \mathfrak{L}(u,s,t,\rho) \left[\frac{(t-s)^{\frac{\rho}{\alpha}}}{(t-u)^{\frac{\rho}{\alpha}}} +1 \right] \nonumber\\&  \exp\left[ \int_u^t \frac{\Vert b^m(v,\cdot) \Vert_{\B_{p,q}^\beta}}{(v-s)^{\frac{\rho}{\alpha}}}   \mathfrak{L}(v,s,t,\rho) \left[\frac{(t-s)^{\frac{\rho}{\alpha}}}{(t-v)^{\frac{\rho}{\alpha}}} +1 \right]  \d v \right] \d u.
\end{align}
Let us first focus on the integral in $v$. We first treat $\Vert b^m(v,\cdot) \Vert_{\B_{p,q}^\beta}$ using Hölder's inequality:
\begin{align*}
	\int_u^t \frac{\Vert b^m(v,\cdot) \Vert_{\B_{p,q}^\beta}}{(v-s)^{\frac{\rho}{\alpha}}} &  \mathfrak{L}(v,s,t,\rho) \left[\frac{(t-s)^{\frac{\rho}{\alpha}}}{(t-v)^{\frac{\rho}{\alpha}}} +1 \right]  \d v \\ &\leq \Vert b^m \Vert_{L^{r} -\B^{\beta}_{p,q}} \left( \int_u^t \frac{ \mathfrak{L}(v,s,t,\rho)^{r'}}{(v-s)^{\frac{\rho {r'}}{\alpha}}}   \left[\frac{(t-s)^{\frac{\rho}{\alpha}}}{(t-v)^{\frac{\rho}{\alpha}}} +1 \right]^{r'}  \d v\right)^{\frac{1}{r'}}.
\end{align*}
Here, we want to specify the conditions under which this integral converges. Let us first make it explicit:
\begin{align*}
		\int_u^t \frac{ \mathfrak{L}(v,s,t,\rho)^{r'}}{(v-s)^{\frac{\rho {r'}}{\alpha}}}   \left[\frac{(t-s)^{\frac{\rho}{\alpha}}}{(t-v)^{\frac{\rho}{\alpha}}} +1 \right]^{r'}  \d v \nonumber &= \int_u^t \frac{ 1}{(v-s)^{\frac{\rho {r'}}{\alpha}}} \frac{(t-s)^{\frac{r' \beta}{\alpha}}}{(t-v)^{\frac{r'}{\alpha}}} \left[\frac{1}{(t-v)^{\frac{d}{\alpha p}}} + \frac{1}{(v-s)^{\frac{d}{\alpha p}}} \right]^{r'} \\
	& \qquad \times     \left[ \frac{(t-s)^{\frac{\rho}{\alpha}}}{(t-v)^{\frac{\rho}{\alpha}}} +\frac{(t-s)^{\frac{\rho}{\alpha}}}{(v-s)^{\frac{\rho}{\alpha}}}+1\right]^{r'}\left[\frac{(t-s)^{\frac{\rho}{\alpha}}}{(t-v)^{\frac{\rho}{\alpha}}} +1\right]^{r'}  \d v .
\end{align*}
As $0\leq s \leq u \leq v\leq t$, singularities appear for $v\rightarrow s$ and $v\rightarrow t$, and it is sufficient to prove that the following integral is convergent:
\begin{equation}\label{integ-expo}
	\int_s^t \frac{ \mathfrak{L}(v,s,t,\rho)^{r'}}{(v-s)^{\frac{\rho {r'}}{\alpha}}}   \left[\frac{(t-s)^{\frac{\rho}{\alpha}}}{(t-v)^{\frac{\rho}{\alpha}}} +1 \right]^{r'}  \d v .
\end{equation}
Setting $v=(t-s)\lambda +s$ and accounting only for the most singular terms in (\ref{integ-expo}), we get
%\begin{align*}
%	\mathcal{I} \nonumber &= \int_0^1 \frac{ 1}{(\lambda (t-s))^{\frac{\rho {r'}}{\alpha}}} \frac{(t-s)^{1+\frac{r' \beta}{\alpha}}}{((1-\lambda) (t-s))^{\frac{r'}{\alpha}}} \left[\frac{1}{((1-\lambda) (t-s))^{\frac{d}{\alpha p}}} + \frac{1}{(\lambda (t-s))^{\frac{d}{\alpha p}}} \right]^{r'} \\
%	& \qquad \times     \left[ \frac{(t-s)^{\frac{\rho}{\alpha}}}{((1-\lambda) (t-s))^{\frac{\rho}{\alpha}}} +\frac{(t-s)^{\frac{\rho}{\alpha}}}{(\lambda (t-s))^{\frac{\rho}{\alpha}}}+1\right]^{r'}\left[\frac{(t-s)^{\frac{\rho}{\alpha}}}{((1-\lambda) (t-s))^{\frac{\rho}{\alpha}}} +1\right]^{r'}  \d \lambda .
%\end{align*}
\begin{align}\label{beta-chgt-var}
	\int_s^t \frac{ \mathfrak{L}(v,s,t,\rho)^{r'}}{(v-s)^{\frac{\rho {r'}}{\alpha}}} &  \left[\frac{(t-s)^{\frac{\rho}{\alpha}}}{(t-v)^{\frac{\rho}{\alpha}}} +1 \right]^{r'}  \d v \nonumber \\&\lesssim (t-s)^{\frac{r'}{\alpha }(\beta - \rho - 1 - \frac{d}{p})+1}\int_0^1  \frac{1}{\lambda^{\frac{r'}{\alpha}(2\rho + \frac{d}{p})}(1-\lambda)^{\frac{r'}{\alpha }}}+ \frac{1}{\lambda^{\frac{\rho r'}{\alpha}}(1-\lambda)^{\frac{r'}{\alpha }(1+\frac{d}{p}+2\rho)}}\d \lambda.
\end{align}
%\begin{align}\label{beta-chgt-vazr}
%	\int_s^t \frac{ \mathfrak{L}(v,s,t,\rho)^{r'}}{(v-s)^{\frac{\rho {r'}}{\alpha}}} &  \left[\frac{(t-s)^{\frac{\rho}{\alpha}}}{(t-v)^{\frac{\rho}{\alpha}}} +1 \right]^{r'}  \d v \nonumber \\&\int_0^1 \frac{(t-s)^{\frac{r'}{\alpha}(\beta + \rho  )+1}}{((1-\lambda)(t-s))^{\frac{r'}{\alpha}(1+\frac{d}{p}+2\rho)}} + \frac{ (t-s)^{\frac{r'}{\alpha} (\rho+\beta-1)+1} }{(\lambda (t-s))^{\frac{r'}{\alpha}(2\rho + \frac{d}{p})}}\d \lambda.
%\end{align}
This integral converges if and only if $\frac{r'}{\alpha}(1+\frac{d}{p}+2\rho)<1$. Denote $\eps = \rho + \beta > 0$. Then, 
\begin{equation}\label{serrin}
	\frac{r'}{\alpha} \left(1+\frac{d}{p}+2\rho\right)<1\ssi \eps < \beta - \frac{1-\alpha + \frac{\alpha}{r}+\frac{d}{p}}{2} = \gamma,
\end{equation}
and as we work under \textbf{(\ref{GR})}, the r.h.s. $\gamma$ of (\ref{serrin}) is positive.\\
Again, under  \textbf{(\ref{GR})}, the exponent of $(t-s)^{\frac{r'}{\alpha }(\beta - \rho - 1 - \frac{d}{p})+1}$ in \eqref{beta-chgt-var} is greater than $\gamma$, and in particular, it is positive, meaning there are no singularities in $(t-s)$ in (\ref{beta-chgt-var}).\\
 Hence $\forall \eps \in (0,\gamma)$ (i.e. $\forall \rho \in (-\beta,\gamma-\beta)$),
$$\int_u^t \frac{\Vert b^m(v,\cdot) \Vert_{\B_{p,q}^\beta}}{(v-s)^{\frac{\rho}{\alpha}}}   \mathfrak{L}(v,s,t,\rho) \left[\frac{(t-s)^{\frac{\rho}{\alpha}}}{(t-v)^{\frac{\rho}{\alpha}}} +1 \right]  \d v \lesssim \Vert b^m \Vert_{L^{r} -\B^{\beta}_{p,q}} .$$

\begin{remark} We see here that the threshold $\rho<\gamma-\beta$ is due to integrability of \eqref{beta-chgt-var}, while the constraint $\rho >-\beta$ comes from the above use of a duality inequality.
\end{remark}

Notice that, in (\ref{g-bound}), the same computations and conditions yield that $g(s,x,t)$ is bounded by a constant $C$ which depends only on $T$ and $ \Vert b^m \Vert_{L^{r} -\B^{\beta}_{p,q}}$ in a non-decreasing way. Using \eqref{maj-bm}, we obtain the uniform boundedness of $g$, which in turn yields $\mathcal{T}_{\infty,\infty}^\rho [h_{s,x}(t,\cdot)]\lesssim \frac{1}{(t-s)^{\frac{\rho}{\alpha}}}$. From the definition of $h_{s,x}(t,\cdot)$ and \eqref{l-inf-bound}, we obtain the upper bound of (\ref{mol-thm-1}) and (\ref{mol-thm-3}). To obtain the lower bound of (\ref{mol-thm-1}), it suffices to write:
\begin{align}\label{lower-bound}
	h_{s,x}(t,y) & \geq C - \frac{1}{\bar{p}_{\alpha} (t-s,y-x)} \left| \int_s^t \int \frac{p^m (u-s,z-x)}{\bar{p}_{\alpha} (u-s,z-x)} b^m (u,z) \bar{p}_{\alpha} (u-s,z-x) \nabla p_\alpha (t-u,y-z) \d z \d u \right| \nonumber \\
	&\gtrsim 1 - \int_s^t  \Vert h_{s,x}(u,\cdot) \Vert_{\B_{\infty,\infty}^\rho} \Vert b^m(u,\cdot) \Vert_{\B_{p,q}^\beta}  \mathfrak{L}(u,s,t,\rho) \left[\frac{(t-s)^{\frac{\rho}{\alpha}}}{(t-u)^{\frac{\rho}{\alpha}}} +1 \right] \d u.
\end{align}
And the result follows from the control which we have already performed on $\Vert h_{s,x}(u,\cdot) \Vert_{\B_{\infty,\infty}^\rho}$ under \textbf{(\ref{GR})}: namely, the integral on the r.h.s. of \eqref{lower-bound} is a $o((t-s)^{\rho/\alpha})$.\\
For items (\ref{mol-thm-2}) and (\ref{mol-thm-4}), it suffices to notice that the whole proof remains the same if we add a derivative w.r.t. the initial value $x$, and using (\ref{space-time-derivatives-bounds}) to account for the gradient at the end.  Namely, we would get the Duhamel expansion
\begin{equation*}
	\nabla_x p^m(s,t,x,y) = \nabla_x p_\alpha (t-s,y-x)+\int_s^t \int \nabla_x p^m (s,u,x,z) b^m (u,z) \nabla_z p_\alpha (t-u,y-z) \d z \d u.
\end{equation*}
In turn, this means we have to study $$H_{s,x}^m(t,y) := (t-s)^{\frac{1}{\alpha}} \frac{\nabla_x p^m (s,t,x,y)}{ \bar{p}_{\alpha} (t-s,y-x)}.$$
Computations then remain the same as in this section, up to a factor $\frac{(t-s)^{1/\alpha}}{(u-s)^{1/\alpha}}$ that will disappear through time integration when using the Gronwall-Volterra lemma. To be precise, it exactly adds $r'/\alpha$ to the exponent of $\lambda$ in \eqref{beta-chgt-var}. Importantly, the condition \eqref{serrin} allowing the integral \eqref{beta-chgt-var} to converge remains the same.
Denoting $G^m(s,x,t):=\Vert H_{s,x}^m(t,\cdot) \Vert_{L^{\infty}} + (t-s)^{\frac{\rho}{\alpha}} \mathcal{T}^\rho_{\infty,\infty}[H_{s,x}^m(t,\cdot)]$, this means we obtain the boundedness of $G^m(s,x,t)$, hence (\ref{mol-thm-2}) and, in turn, (\ref{mol-thm-4}).

\end{proof}

\section{From the smooth approximation to the actual SDE}\label{sec-approx}

By Proposition \ref{approx-lemma}, let $(b^m)_{m\in \N}$ be a sequence of smooth bounded functions s.t.
$$\Vert b-b^m\Vert_{L^{\tilde{r}} -\B^{\tilde{\beta}}_{p,q}} \underset{m \rightarrow \infty}{\longrightarrow} 0,\qquad \forall \tilde{\beta} < \beta,$$
with $\tilde{r} = r$ if $r<\infty$ and for any $\tilde{r}<\infty$ otherwise and let $ \kappa \geq 1:$
$$ \sup_{m\in \N} \Vert b^m \Vert_{L^{\tilde{r}} -\B^\beta_{p,q}} \leq \kappa \Vert b \Vert_{L^{\tilde{r}} -\B^\beta_{p,q}}.$$
%Let $(p^m)_{m\in \N}$ be the associated sequence of probability measures.\\

The following was already discussed in \cite{CdRM22}, but we reproduce it here for the sake of completeness. 

\begin{paragraph}{Tightness of the sequence of probability measures $(\P^m)_{m\in \N}$\\[0.5cm]}
	
Notice that when considering the \textit{mollified} equation (\ref{mol-sde}), for every $m$, the martingale problem associated with $(b^m,\mathcal{L}^\alpha,x)$ is well posed (see \cite{CdRM22}). Let us denote by $\P^m$ its solution and by $(x_t^m)_{t\geq 0}$ the associated canonical process. Let $u_m=(u_m^1,...,u_m^d)$ where, $\forall i, u^i_m$ is a mild solution of the classical Cauchy problem $\mathcal{C}(b^m,\mathcal{L}^\alpha,-b^m_i,0,T)$ (i.e. with terminal condition $u_m^i (T,\cdot)=0$ and source term $-b^m_i$, the $i^{\mathrm{th}}$ component of $b^m$), so that
$$\left( u_m(t,x_t^m) + \int_0^t b^m(s,x_s^m) \d s - u(0,x)\right)_{0\leq t \leq T}$$ is a $\P^m$-martingale, which we can express, through Itô's formula, as
\begin{equation}\label{zvonkin-martingale}
	M_{v,s}(u_m,x^m) :=\int_v^s \int_{\R^d \backslash \left\{ 0 \right\} } \left[u_m(r,x_{r^-}^m + x) - u_m(r,x_{r^-}^m )\right] \tilde{N}(\d r, \d x), \qquad \forall s\geq v,
\end{equation}
	where $\tilde{N}$ is the compensated Poisson measure. 
%	\begin{equation}
%		\d X_t^m = b^m (t,X_t^m) \d t + \d Z_t
%	\end{equation}
Itô's formula now writes
%	\begin{equation*}
%		X_s^m -X_v^m= \int_v^s b^m (r,X_r^m) \d r + Z_s - Z_v
%	\end{equation*}
\begin{equation}
 x_s^m -x_v^m = 	M_{v,s}(u_m,x^m) +Z_s-Z_v - \left[u_m(s,x_{s}^m) - u_m(v,x_{v}^m )\right] .
\end{equation}

We will use an Aldous criterion to prove the tightness of $(\P^m)_{m\in \N}$, which means we need a control of the form $\mathbb{E}[|X_s^m -X_v^m|^p]\leq c(s-v)^\zeta$ for some $p>0$ and some $\zeta>0$ (see Proposition 34.9 from \cite{Bas11}). Since $\forall i, u_m^i$ is the mild solution of the Cauchy problem $\mathcal{C}(b^m,\mathcal{L}^\alpha,-b_m^i,0,T)$, we can write
\begin{align}
	|u_m(v,x_{v}^m) - u_m(s,x_{s}^m )| &\leq |u_m(v,x_{v}^m) - u_m(v,x_{s}^m)| + |u_m(v,x_{s}^m )- u_m(s,x_{s}^m )|,
\end{align}
and use Proposition 9 from \cite{CdRM22} to get the required space and time controls. Namely, for the spatial part, $\exists C_T$ s.t. $C_T\rightarrow0$ as $T\rightarrow0$ and $|u_m(v,x_{v}^m) - u_m(v,x_{s}^m)|<C_T |x_v^m - x_s^m|$. For the time part, we use the Hölder continuity in time of $u_m$.
For $M_{v,s}(u_m,x^m)$, the control follows from the Burkholder--Davis--Gundy inequality and, finally, for $Z_s-Z_v$, it follows from \eqref{spatial-moments} and the stationarity of $Z$.
\end{paragraph}

\begin{paragraph}{Limit probability measure\\[0.5cm]}
We will now prove that any limit probability measure $\mathbb{ P}$ is a martingale solution to \eqref{sde} in the sense of Definition \ref{mart-sol}. Let $f \in \mathcal{ C}([0,T], \mathcal S(\R^d,\R))$,  $g \in \mathcal{C}^1(\R^d,\R)$ with $Dg \in \B_{\infty,\infty}^{\theta-1}(\R^d,\R^d)$.  Let $u_m\in \mathcal{C}^{0,1}([0,T]\times \R^d)$ be the classical solution of the \textit{mollified} Cauchy problem $\mathcal{C}(b^m,\mathcal{L}^\alpha,f,g,T)$, with $Du_m \in \mathcal{ C}_b^0 ([0,T],\B^{\theta-1-\eps}_{\infty,\infty})$ for some $0<\eps\ll 1 $. 
%Applying Itô's formula, we obtain
%Let $f:[0,T]\times \R^d \rightarrow \R$ be a measurable, bounded in space and continuous in time function and let $u_m$ be the classical solution of the \textit{mollified} Cauchy problem $\mathcal{C}(b^m,\mathcal{L}^\alpha,f,0,T)$. Applying Itô's formula, we obtain
%$$u_m(t,X_t^m) - u_m(0,x_0)-\int_0^t f(s,X_s^m) \d s = M_{0,t}(u_m,X^m).$$
By Theorem \ref{mol-thm}, we have a uniform control of the modulus of continuity of $u_m$ and $Du_m$. By the Arzelà-Ascoli Theorem, we can extract a subsequence $(u_{m_k},Du_{m_k})_k$ s.t. $(u_{m_k})_k$ and $(Du_{m_k})_k$ converge uniformly on every compact subsets of $[0,T]\times \R^d$ to some functions $u\in \mathcal{C}^{0,1}([0,T]\times \R^d)$ and $Du\in \mathcal{ C}_b^0 ([0,T],\B^{\theta-1-\epsilon}_{\infty,\infty}), \forall \epsilon \in (0,\eps)$ respectively ($Du$ being the space-derivative of $u$). Because of this uniform convergence, (\ref{mild-sol}) holds for the limit, i.e.
\begin{equation}
	\forall (t,x) \in [0,T]\times \R^d, u(t,x)=P^\alpha_{T-t}[g](x) - \int_t^T P_{s-t}^\alpha [f-b\cdot Du ](s,x) \d s,
\end{equation}
hence $u$ is a mild solution to $\mathcal{C}(b,\mathcal{L}^\alpha,f,g,T)$ (see again Remark \ref{beta-tilde} for the handling of the space regularity). Together with a control of the moments of $X^m$ (which we already obtained in the last paragraph), we deduce that 
$$\left( u(t,x_t) + \int_0^t f(s,x_s) \d s - u(0,x_0)\right)_{0\leq t \leq T}$$ 
is a $\P$-martingale.

\end{paragraph}	

\begin{paragraph}{Uniqueness of the limit probability measure\\[0.5cm]}
Let $\P$ and $\tilde{\P}$ be two solution of the martingale problem associated with $(b,\mathcal{L}^\alpha,x_0)$ for some $x_0\in \R^d$. Thus, $\forall f\in \mathcal{ C}([0,T], \mathcal S(\R^d,\R))$, taking $g=0$,
$$u(0,x_0) = \mathbb{E}^{\P}\left[\int_0^T f(s,x_s) \d s\right] = \mathbb{E}^{\tilde{\P}}\left[\int_0^T f(s,x_s) \d s\right],$$
which is sufficient to prove uniqueness in law (see e.g. \cite{EK86}).\\
\end{paragraph}	

Since $X_t^m = x_t^m$, $p^m$ is the density of the canonical process under $\P^m$.  From the Arzelà-Ascoli theorem which can be applied from the estimates derived in Theorem \ref{mol-thm}, we can extract a subsequence $(p^{m_k},\nabla_x p^{m_k})_k$ s.t. $(p^{m_k})_k$ and $(\nabla_x p^{m_k})_k$ converge uniformly on every compact subset to some functions $p$ and $\nabla_x p$ ($\nabla_x p$ being the derivative of $p$). By the uniqueness results from \cite{CdRM22}, $p$ is the time marginal of $\P$, and enjoys the estimates of Theorem \ref{thm-main}.

\section{Proofs}\label{sec-proof}

\begin{proof}[Proof of Lemma \ref{pre-big-lemma} (Bounds and $L^p$ estimates for the noise density)]
The first item about spatial moments (\ref{spatial-moments}) is plain from the definition. The second item \eqref{space-time-derivatives-bounds} stating bounds for space-time derivatives of $p_\alpha$ is proved in \cite{CdRM22}, although in a more general setting (see also \cite{Kol00}, Proposition 2.5 for the current absolutely continuous setting).\\

\noindent For the distortion part (\ref{disto}), if $\ell ' \neq \infty$, it suffices to write
\begin{equation*}
\Vert \bar{p}_\alpha (v,\cdot) \Vert_{L^{\ell'}}^{\ell'} = \int \frac{1}{v^{\frac{d\ell'}{\alpha}}} \frac{1}{\left( 1+\frac{|x|}{v^{\frac{1}{\alpha}}}\right)^{\ell'(d+\alpha)}} \d x = v^{-\frac{d}{\alpha}(\ell'-1)}\int \frac{1}{v^{\frac{d}{\alpha}}} \frac{1}{\left( 1+\frac{|x|}{v^{\frac{1}{\alpha}}}\right)^{\ell'(d+\alpha)}} \d x \lesssim  v^{-\frac{d}{\alpha}(\ell'-1)}.
\end{equation*}
For $\ell = +\infty$, the result is plain from the definition of $\bar{p}_\alpha$.\\
Let us now prove the convolution part (\ref{p-q-convo}). Denote
\begin{align*}
\mathfrak{I} &:= \Vert \bar{p}_\alpha (t-u,\cdot-y) \bar{p}_\alpha (u-s,x-\cdot)\Vert_{L^{\ell'}}^{\ell'}\\
&\lesssim \int \frac{1}{(t-u)^{\frac{d\ell'}{\alpha}}} \frac{1}{\left( 1+\frac{|z-y|}{(t-u)^{\frac{1}{\alpha}}}\right)^{(d+\alpha)\ell'}} \frac{1}{(u-s)^{\frac{d\ell'}{\alpha}}} \frac{1}{\left( 1+\frac{|x-z|}{(u-s)^{\frac{1}{\alpha}}}\right)^{(d+\alpha)\ell'}}\d z.
\end{align*}

\begin{itemize}
\item \textbf{Diagonal case:} $|x-y|<(t-s)^{1/\alpha}$\\
In this case, either $(t-u)\geq \frac{1}{2} (t-s)$ or $(u-s)\geq \frac{1}{2} (t-s)$, i.e. one of the two contributions in $\mathfrak{I}$ is in the diagonal regime, allowing us to use \eqref{diag}.
\begin{itemize}
\item If $(t-u)\geq \frac{1}{2} (t-s)$,
\begin{align*}
\mathfrak{I} &\lesssim \int \frac{1}{(t-u)^{\frac{d\ell'}{\alpha}}} \frac{1}{\left( 1+\frac{|z-y|}{(t-u)^{\frac{1}{\alpha}}}\right)^{(d+\alpha)\ell'}} \frac{1}{(u-s)^{\frac{d\ell'}{\alpha}}} \frac{1}{\left( 1+\frac{|x-z|}{(u-s)^{\frac{1}{\alpha}}}\right)^{(d+\alpha)\ell'}}\d z\\
&\lesssim \frac{1}{(t-u)^{\frac{d\ell'}{\alpha}}}  \frac{1}{(u-s)^{\frac{d}{\alpha}(\ell'-1)}} \int \frac{1}{(u-s)^{\frac{d}{\alpha}}} \frac{1}{\left( 1+\frac{|x-z|}{(u-s)^{\frac{1}{\alpha}}}\right)^{(d+\alpha)\ell'}}\d z\\
&\lesssim \frac{1}{(t-u)^{\frac{d\ell'}{\alpha}}}  \frac{1}{(u-s)^{\frac{d}{\alpha}(\ell'-1)}}.
\end{align*}
Since $(t-u)\geq \frac{1}{2} (t-s)$,
$$\frac{1}{(t-u)^{\frac{d}{\alpha}}} \lesssim \frac{1}{(t-s)^{\frac{d}{\alpha}}} \asymp \bar{p}_\alpha  (t-s,y-x),$$
hence
\begin{equation*}
\mathfrak{I}\lesssim \bar{p}_\alpha (t-s,x-y)^{\ell'} \frac{1}{(u-s)^{\frac{d}{\alpha}(\ell'-1)}}.
\end{equation*}
\item If $(u-s)\geq \frac{1}{2} (t-s)$, the same computations give, when swapping the roles of $u-s$ and $t-u$,
\begin{equation*}
\mathfrak{I}\lesssim \bar{p}_\alpha (t-s,x-y)^{\ell'} \frac{1}{(t-u)^{\frac{d}{\alpha}(\ell'-1)}}.
\end{equation*}
\end{itemize}
\item \textbf{Off-diagonal case:} $|x-y|\geq (t-s)^{1/\alpha}$\\
In this case, either $|x-z|\geq\frac{1}{2}|x-y|$ or $|z-y|>\frac{1}{2}|x-y|$, i.e. one of the two contributions in $\mathfrak{I}$ is in the off-diagonal regime, allowing us to use \eqref{off-diag}.
\begin{itemize}
\item If $|x-z|\geq\frac{1}{2}|x-y|>\frac{1}{2} (t-s)^{1/\alpha}$,
\begin{align*}
\mathfrak{I} &\lesssim \frac{1}{(u-s)^{\frac{d\ell'}{\alpha}}} \frac{1}{\left( 1+\frac{|x-y|}{(u-s)^{\frac{1}{\alpha}}}\right)^{(d+\alpha)\ell'}} \int \frac{1}{(t-u)^{\frac{d\ell'}{\alpha}}} \frac{1}{\left( 1+\frac{|z-y|}{(t-u)^{\frac{1}{\alpha}}}\right)^{(d+\alpha)\ell'}} \mathbb{1}_{|x-z|\geq\frac{1}{2}|x-y|} \d z\\
&\lesssim \bar{p}_\alpha (u-s,x-y)^{\ell'} \frac{1}{(t-u)^{\frac{d}{\alpha}(\ell'-1)}}\int \frac{1}{(t-u)^{\frac{d}{\alpha}}} \frac{1}{\left( 1+\frac{|z-y|}{(t-u)^{\frac{1}{\alpha}}}\right)^{(d+\alpha)\ell'}}\mathbb{1}_{|x-z|\geq\frac{1}{2}|x-y|} \d z 
%&\lesssim \bar{p}_\alpha (u-s,x-y)^{\ell'} \frac{1}{(t-u)^{\frac{d}{\alpha}(\ell'-1)}}
\end{align*}

Since $|x-y|>(u-s)^{1/\alpha}$,
$$\bar{p}_\alpha (u-s,x-y) \asymp \frac{u-s}{|x-y|^{d+\alpha}} \leq \frac{t-s}{|x-y|^{d+\alpha}} \asymp \bar{p}_\alpha (t-s,x-y),$$
hence
\begin{equation*}
\mathfrak{I}\lesssim \bar{p}_\alpha (t-s,x-y)^{\ell'} \frac{1}{(t-u)^{\frac{d}{\alpha}(\ell'-1)}}.
\end{equation*}
\begin{equation*}
	\mathfrak{I}\lesssim \bar{p}_\alpha (t-s,x-y)^{\ell'} \frac{1}{(t-u)^{\frac{d}{\alpha}(\ell'-1)}}\int \frac{1}{(t-u)^{\frac{d}{\alpha}}} \frac{1}{\left( 1+\frac{|z-y|}{(t-u)^{\frac{1}{\alpha}}}\right)^{(d+\alpha)\ell'}}\mathbb{1}_{|x-z|\geq\frac{1}{2}|x-y|} \d z \\
\end{equation*}
\item If $|z-y|>\frac{1}{2}|x-y|>\frac{1}{2} (t-s)^{1/\alpha}$, the same computations give, when swapping the roles of $|x-z|$ and $|y-z|$,
\begin{equation}
\mathfrak{I}\lesssim \bar{p}_\alpha (t-s,x-y)^{\ell'} \frac{1}{(u-s)^{\frac{d}{\alpha}(\ell'-1)}}.
\end{equation}
\end{itemize}
\end{itemize}

In each case, we have 
\begin{align*}
\Vert \bar{p}_\alpha (t-u,\cdot-y) \bar{p}_\alpha (u-s,x-\cdot)\Vert_{L^{\ell'}} &= \mathfrak{I}^{\frac{1}{\ell'}} \lesssim \left[ \frac{1}{(t-u)^{\frac{d}{\alpha}\frac{\ell'-1}{\ell'}}} +\frac{1}{(u-s)^{\frac{d}{\alpha}\frac{\ell'-1}{\ell'}}}\right] \bar{p}_\alpha (t-s,x-y) \nonumber \\
& \lesssim \left[ \frac{1}{(t-u)^{\frac{d}{\alpha  \ell}}} +\frac{1}{(u-s)^{\frac{d}{\alpha  \ell}}}\right] \bar{p}_\alpha(t-s,x-y).
\end{align*}
Which concludes the proof of Lemma \ref{pre-big-lemma}.
\end{proof}

\begin{proof}[Proof of Lemma \ref{holder-noise} (Taylor-Laplace for stable densities)] We will only prove (\ref{taylor-stable-density}) as (\ref{taylor-approx-density}) follows from the same proof.\\ Since we are working in the diagonal case $|z-w|\lesssim t^{1/\alpha}$, it makes sense to use a Taylor formula:
\begin{align*}
	|D^\ell p_\alpha (t,x-w) - D^\ell p_\alpha (t,x-z)| &\leq \int_0^1 \left| D^{\ell+1}p_\alpha (t,x+\lambda (z-w)-z)\cdot (z-w) \right| \d \lambda\\
	&\lesssim \frac{|z-w|}{t^{\frac{\ell+1}{\alpha}}} \int_0^1 \left| \bar{p}_\alpha (t,x+\lambda (z-w)-z) \right| \d \lambda\\
	&\lesssim \frac{|z-w|^\zeta }{t^{\frac{\ell+\zeta}{\alpha}}} \int_0^1  \bar{p}_\alpha (t,x+\lambda (z-w)-z) \d \lambda,
\end{align*}
where, in the last inequality, we introduced a free exponent $\zeta \in (0,1]$.\\
Notice that $|w-x|\leq |x+\lambda (z-w) -z| + (1-\lambda)|z-w|$. We therefore deduce that in the current diagonal regime, $\frac{|w-x|}{t^{1/\alpha}}\lesssim \frac{|x+\lambda (z-w) -z| }{t^{1/\alpha}} + (1-\lambda)$, hence
$$\bar{p}_\alpha (t,w-x)  \gtrsim t^{-\frac{d}{\alpha}} \left( 1+(1-\lambda) + \frac{|x+\lambda (z-w)-z|}{t^{\frac{1}{\alpha}}}\right)^{-d-\alpha} \gtrsim t^{-\frac{d}{\alpha}} \left( 1 + \frac{|x+\lambda (z-w)-z|}{t^{\frac{1}{\alpha}}}\right)^{-d-\alpha},$$
which in turn yields $$ \int_0^1  \bar{p}_\alpha (t,x+\lambda (z-w)-z) \d \lambda \lesssim \bar{p}_\alpha (t,w-x), $$
thus concluding the proof.
\end{proof}

\begin{proof}[Proof of Lemma \ref{big-lemma} (Besov estimates for $\bar{p}_\alpha$)]
Let's begin with the proof of item (\ref{big-lemma-1}), of which \eqref{big-lemma-2} is a consequence. We will use the thermic characterization of the Besov norm introduced in section \ref{sec-lemmas}:

\begin{equation*}
\Vert \nabla^{j} \bar{p}_{\alpha} (u-s,\cdot-x) \nabla^k p_\alpha (t-u,y-\cdot) \Vert_{\B^{-\beta}_{p',q'}}  = \Vert \phi(D) \mathfrak{q} \Vert_{L^{p'}} + \mathcal{T}_{p',q'}^{-\beta} [\mathfrak{q}(\cdot)],
\end{equation*}
where $\mathfrak{q}(\cdot):=\mathfrak{q}_{s,x,t,u,y}(\cdot)=\nabla^{j}\bar{p}_\alpha (u-s,x-\cdot) \nabla^k p_\alpha (t-u,\cdot-y)$.
\begin{paragraph}{Thermic part \\[0.5cm]}

Recall the definition of the thermic part:

\begin{align*}
\mathcal{T}_{p',q'}^{-\beta} [\mathfrak{q}] ^{q'} &= \int_0^1 \frac{\d v}{v} v^{\left(1+\frac{\beta}{\alpha}\right)q'} \Vert \partial_v p_\alpha (v,\cdot) \star \mathfrak{q} (\cdot)\Vert_{L^{p'}}^{q'}\\
&= \int_0^1 \frac{\d v}{v} v^{\left(1+\frac{\beta}{\alpha}\right)q'} \Vert \partial_v p_\alpha (v,\cdot) \star \nabla^{j}\bar{p}_{\alpha} (u-s,x-\cdot) \nabla^k p_\alpha (t-u,\cdot-y) \Vert_{L^{p'}}^{q'},
\end{align*}
and let us now split this integral over $(0,1)$ into two parts $(0,a)$ and $(a,1)$, where $a$ is a real number in $(0,1)$ to be specified later, which will allow us to balance the contributions of each part of the integral. In the upper part, as $v$ is distinct from 0, no singularities appear and we can simply use convolution inequalities and Lemma \ref{deriv-density-lemma}.
\begin{align}\label{upper-cut} 
\nonumber\mathcal{T}_{p',q'}^{-\beta,(a,1)} [\mathfrak{q}]  &:= \int_a^1 \frac{\d v}{v} v^{\left(1+\frac{\beta}{\alpha}\right)q'} \Vert \partial_v p_\alpha (v,\cdot) \star \nabla^{j}\bar{p}_{\alpha} (u-s,x-\cdot) \nabla^k p_\alpha (t-u,\cdot-y) \Vert_{L^{p'}}^{q'} \\
&\nonumber\lesssim \int_a^1 \frac{\d v}{v} v^{\left(1+\frac{\beta}{\alpha}\right)q'} \left\Vert \frac{1}{v} \bar{p}_\alpha (v,\cdot) \star \frac{1}{(u-s)^{\frac{j}{\alpha}}}\bar{p}_{\alpha} (u-s,x-\cdot) \frac{1}{(t-u)^{\frac{k}{\alpha}}}\bar{p}_\alpha (t-u,\cdot-y) \right\Vert_{L^{p'}}^{q'}\\
&\nonumber\lesssim \frac{1}{(u-s)^{\frac{jq'}{\alpha}}(t-u)^{\frac{kq'}{\alpha}}} \int_a^1  v^{\left(1+\frac{\beta}{\alpha}-1\right)q'-1} \left\Vert \bar{p}_\alpha (v,\cdot) \right\Vert_{L^{1}}^{q'} \left\Vert \bar{p}_{\alpha} (u-s,x-\cdot)  \bar{p}_\alpha (t-u,\cdot-y) \right\Vert_{L^{p'}}^{q'}\d v\\
&\nonumber\lesssim \frac{1}{(u-s)^{\frac{jq'}{\alpha}}(t-u)^{\frac{kq'}{\alpha}}} \int_a^1  v^{\frac{\beta q'}{\alpha}-1}\left[ \frac{1}{(t-u)^{\frac{dq'}{\alpha  p}}} +\frac{1}{(u-s)^{\frac{dq'}{\alpha  p}}}\right] \bar{p}_\alpha^{q'} (t-s,x-y)\d v\\
&\lesssim \frac{\bar{p}_\alpha^{q'}(t-s,x-y)}{(u-s)^{\frac{jq'}{\alpha}}(t-u)^{\frac{kq'}{\alpha}}}\left[ \frac{1}{(t-u)^{\frac{dq'}{\alpha  p}}} +\frac{1}{(u-s)^{\frac{dq'}{\alpha  p}}}\right] a^{\frac{\beta q'}{\alpha}}.
\end{align}
\begin{remark}
	Pay attention to the fact that $\mathcal{T}_{p',q'}^{-\beta,(a,1)}$ is not homogeneous to $\mathcal{T}_{p',q'}^{-\beta} [\mathfrak{q}]$ as we omitted the exponent $q'$ in order to avoid Young inequalities down the line.
\end{remark}	

Now with the lower part:
\begin{align}\label{lower-cut}
\mathcal{T}_{p',q'}^{-\beta,(0,a)} [\mathfrak{q}] &:= \int_0^a \frac{\d v}{v} v^{\left(1+\frac{\beta}{\alpha}\right)q'} \Vert \partial_v p_\alpha (v,\cdot) \star \nabla^j \bar{p}_{\alpha} (u-s,x-\cdot) \nabla^k p_\alpha (t-u,\cdot-y) \Vert_{L^{p'}}^{q'}.
\end{align}

Let us write
\begin{align}\label{elepe-norm-th}
\Vert & \partial_v p_\alpha (v,\cdot) \star \nabla^j \bar{p}_{\alpha} (u-s,x-\cdot) \nabla^k p_\alpha (t-u,\cdot-y) \Vert_{L^{p'}}^{q'} \nonumber \\
&= \left(\int\left|\int \partial_v p_\alpha (v,z-w)  \nabla^j \bar{p}_{\alpha} (u-s,x-w) \nabla^k p_\alpha (t-u,w-y) \d w \right|^{p'}\d z \right)^{\frac{q'}{p'}} \nonumber \\
&= \left(\int\left|\int \partial_v p_\alpha (v,z-w) [ \nabla^j\bar{p}_{\alpha} (u-s,x-w) \nabla^k p_\alpha (t-u,w-y) \right. \right. \nonumber \\& \qquad \qquad\qquad\qquad\qquad\qquad\qquad \left.\left. - \nabla^j\bar{p}_{\alpha} (u-s,x-z) \nabla^k p_\alpha (t-u,z-y) ] \d w \right|^{p'}\d z \right)^{\frac{q'}{p'}},
\end{align}

where, for the last equality, we have used the cancellation argument:
$$ \int \partial_v p_\alpha (v,z-w) \nabla^j\bar{p}_{\alpha} (u-s,x-z) \nabla^k p_\alpha (t-u,z-y) \d w =0.$$
The point of this cancellation is to compensate singularities in $v^{-1+\frac{\beta}{\alpha}q'}$ through a control involving $|z-w|^\zeta$ for some $\zeta>\beta$ and the regularization properties of the thermic kernel, (\ref{spatial-moments}). Next, let us distinguish whether this difference is in diagonal or off-diagonal regime.
\begin{itemize}
	\item \textbf{Diagonal case: $|z-w|\leq (u-s)^\frac{1}{\alpha}$}. Let us write
	\begin{align*}
		&|\nabla^j\bar{p}_{\alpha} (u-s,x-w) \nabla^k p_\alpha (t-u,w-y) - \nabla^j\bar{p}_{\alpha} (u-s,x-z) \nabla^k p_\alpha (t-u,z-y)|\\
		& \lesssim |\nabla^j\bar{p}_{\alpha} (u-s,x-w)  - \nabla^j\bar{p}_{\alpha} (u-s,x-z)||\nabla^k p_\alpha (t-u,w-y)| \nonumber \\ &\qquad + |\nabla^j\bar{p}_{\alpha} (u-s,x-z)||\nabla^k p_\alpha (t-u,w-y)- \nabla^k p_{\alpha} (t-u,z-y) |.
	\end{align*}
	Using \eqref{taylor-approx-density} and the fact that in the diagonal case, $\bar{p}_\alpha(u-s,x-w)\asymp \bar{p}_\alpha(u-s,x-z)$, we have, for any $\zeta \in (0,1]$,
	\begin{align}
		&|\nabla^j\bar{p}_{\alpha} (u-s,x-w) \nabla^k p_\alpha (t-u,w-y) -\nonumber \nabla^j\bar{p}_{\alpha} (u-s,x-z) \nabla^k p_\alpha (t-u,z-y)|\\
		& \lesssim \frac{|z-w|^\zeta}{(u-s)^{\frac{\zeta+j}{\alpha}}(t-u)^{\frac{k}{\alpha}}}  \bar{p}_{\alpha} (u-s,x-w)\bar p_\alpha (t-u,w-y) \nonumber \\ &\qquad \nonumber + \frac{|z-w|^\zeta}{(u-s)^{\frac{j}{\alpha}}(t-u)^{\frac{\zeta+k}{\alpha}}} \bar{p}_{\alpha} (u-s,x-z)\left(\bar p_\alpha (t-u,w-y)+  \bar p_{\alpha} (t-u,z-y) \right)\\
		& \lesssim \frac{|z-w|^\zeta}{(u-s)^{\frac{j}{\alpha}}(t-u)^{\frac{k}{\alpha}}}\nonumber \left[\frac{1}{(u-s)^{\frac{\zeta}{\alpha}}}+\frac{1}{(t-u)^{\frac{\zeta}{\alpha}}}\right] \bar{p}_{\alpha} (u-s,x-w)\bar p_\alpha (t-u,w-y) \nonumber \\
		&\qquad + \frac{|z-w|^\zeta}{(u-s)^{\frac{j}{\alpha}}(t-u)^{\frac{\zeta+k}{\alpha}}} \bar{p}_{\alpha} (u-s,x-z)\bar p_{\alpha} (t-u,z-y).\label{pivot-diag}
	\end{align}
	\item \textbf{Diagonal case: $|z-w|\geq (u-s)^\frac{1}{\alpha}$}. In that case, we simply use a triangular inequality along with \eqref{space-time-derivatives-bounds} and multiply by $(|z-w|/(u-s)^{1/\alpha})^\zeta\geq 1$:
	\begin{align}
		&|\nabla^j\bar{p}_{\alpha} (u-s,x-w) \nabla^k p_\alpha (t-u,w-y) - \nabla^j\bar{p}_{\alpha} (u-s,x-z) \nabla^k p_\alpha (t-u,z-y)|\nonumber\\
		& \lesssim \frac{|z-w|^\zeta}{(u-s)^{\frac{j+\zeta}{\alpha}}(t-u)^{\frac{k}{\alpha}}} \left( \bar{p}_{\alpha} (u-s,x-w)\bar p_\alpha (t-u,w-y)+\bar{p}_{\alpha} (u-s,x-z)\bar p_{\alpha} (t-u,z-y)\right) .\label{pivot-off-diag}
	\end{align}
\end{itemize}
Plugging \eqref{pivot-diag} and \eqref{pivot-off-diag} into \eqref{elepe-norm-th}, we get
\begin{align*}
	\Vert & \partial_v p_\alpha (v,\cdot) \star \nabla^j\bar{p}_{\alpha} (u-s,x-\cdot) \nabla^k p_\alpha (t-u,\cdot-y) \Vert_{L^{p'}}^{q'} \\
	&\lesssim  \left(\int\left(\int \frac{ |\partial_v p_\alpha (v,z-w)||z-w|^\zeta}{(u-s)^{\frac{j}{\alpha}}(t-u)^{\frac{k}{\alpha}}}\nonumber \left[\frac{1}{(u-s)^{\frac{\zeta}{\alpha}}}+\frac{1}{(t-u)^{\frac{\zeta}{\alpha}}}\right] \bar{p}_{\alpha} (u-s,x-w)\bar p_\alpha (t-u,w-y)  \d w \right)^{p'}\d z \right)^{\frac{q'}{p'}}\\
	&\qquad +  \left(\int\left(\int  \frac{|\partial_v p_\alpha (v,z-w)||z-w|^\zeta}{(u-s)^{\frac{j}{\alpha}}(t-u)^{\frac{\zeta+k}{\alpha}}} \bar{p}_{\alpha} (u-s,x-z)\bar p_{\alpha} (t-u,z-y) \d w \right)^{p'}\d z \right)^{\frac{q'}{p'}}\\
	&=: I_1+I_2.
\end{align*}
For $I_2$, we can immediately derive a smoothing effect in $v$ by using the moments estimate \eqref{spatial-moments} of the stable kernel. Namely, this yields
\begin{align*}
	 I_2 & \lesssim \left(\frac{v^{\frac{\zeta}{\alpha}-1}}{(u-s)^{\frac{j}{\alpha}}(t-u)^{\frac{\zeta+k}{\alpha}}}\right)^{q'} \Vert \bar{p}_{\alpha} (u-s,x-\cdot)\bar p_{\alpha} (t-u,\cdot-y) \Vert_{L^{p'}}^{q'}\\
	 &\lesssim \left(\frac{v^{\frac{\zeta}{\alpha}-1}}{(u-s)^{\frac{j}{\alpha}}(t-u)^{\frac{\zeta+k}{\alpha}}}\right)^{q'}\left[\frac{1}{(t-u)^{\frac{dq'}{\alpha p}}} + \frac{1}{(t-s)^{\frac{dq'}{\alpha p}}}\right] \bar{p}_\alpha^{q'}(t-s,y-x),
\end{align*}
where we used \eqref{p-q-convo} for the last inequality. For $I_1$, due to the integration order, we first have to use an $L^1-L^{p'}$ convolution inequality. Proceeding similarly, we get
\begin{align*}
	 I_1 \lesssim \left(\frac{v^{\frac{\zeta}{\alpha}-1}}{(u-s)^{\frac{j}{\alpha}}(t-u)^{\frac{k}{\alpha}}}\right)^{q'}\left[\frac{1}{(u-s)^{\frac{\zeta}{\alpha}}}+\frac{1}{(t-u)^{\frac{\zeta}{\alpha}}}\right]^{q'}\left[\frac{1}{(t-u)^{\frac{dq'}{\alpha p}}} + \frac{1}{(t-s)^{\frac{dq'}{\alpha p}}}\right] \bar{p}_\alpha^{q'}(t-s,y-x).
\end{align*}
We thus obtain 
\begin{align*}
	\Vert & \partial_v p_\alpha (v,\cdot) \star \nabla^j\bar{p}_{\alpha} (u-s,x-\cdot) \nabla^k p_\alpha (t-u,\cdot-y) \Vert_{L^{p'}}^{q'} \\
	&\lesssim \left(\frac{v^{\frac{\zeta}{\alpha}-1}}{(u-s)^{\frac{j}{\alpha}}(t-u)^{\frac{k}{\alpha}}}\right)^{q'}\left[\frac{1}{(u-s)^{\frac{\zeta}{\alpha}}}+\frac{1}{(t-u)^{\frac{\zeta}{\alpha}}}\right]^{q'}\left[\frac{1}{(t-u)^{\frac{dq'}{\alpha p}}} + \frac{1}{(t-s)^{\frac{dq'}{\alpha p}}}\right] \bar{p}_\alpha^{q'}(t-s,y-x).
\end{align*}
Going back to (\ref{lower-cut}),
\begin{align*}
	&\mathcal{T}_{p',q'}^{-\beta,(0,a)} [\mathfrak{q}] := \int_0^a \frac{\d v}{v} v^{\left(1+\frac{\beta}{\alpha}\right)q'} \Vert \partial_v p_\alpha (v,\cdot) \star \nabla^j\bar{p}_{\alpha} (u-s,x-\cdot) \nabla^k p_\alpha (t-u,\cdot-y) \Vert_{L^{p'}}^{q'} \\
	&\lesssim  \left(\frac{1}{(u-s)^{\frac{j}{\alpha}}(t-u)^{\frac{k}{\alpha}}}\right)^{q'}\left[\frac{1}{(u-s)^{\frac{\zeta}{\alpha}}}+\frac{1}{(t-u)^{\frac{\zeta}{\alpha}}}\right]^{q'}\left[\frac{1}{(t-u)^{\frac{dq'}{\alpha p}}} + \frac{1}{(t-s)^{\frac{dq'}{\alpha p}}}\right] \bar{p}_\alpha^{q'}(t-s,y-x)\\ & \qquad \qquad \times \int_0^a  \frac{1}{v} v^{\left(1+\frac{ \beta}{\alpha}\right)q'} v^{q'(\frac{\zeta}{\alpha}-1)} \d v\\
	&\lesssim  \left(\frac{1}{(u-s)^{\frac{j}{\alpha}}(t-u)^{\frac{k}{\alpha}}}\right)^{q'}\left[\frac{1}{(u-s)^{\frac{\zeta}{\alpha}}}+\frac{1}{(t-u)^{\frac{\zeta}{\alpha}}}\right]^{q'}\left[\frac{1}{(t-u)^{\frac{dq'}{\alpha p}}} + \frac{1}{(t-s)^{\frac{dq'}{\alpha p}}}\right] \bar{p}_\alpha^{q'}(t-s,y-x)a^{q'\left(\frac{\beta+\zeta}{\alpha}\right)},
\end{align*}
where we set $\zeta > -\beta$ so that the integral converges. Setting the overall value for $a=t-s$ and recalling \eqref{upper-cut}, we have obtained
\begin{align*}
	&\mathcal{T}_{p',q'}^{-\beta} [\mathfrak{q}] =\left( \mathcal{T}_{p',q'}^{-\beta,(0,t-s)} [\mathfrak{q}] + \mathcal{T}_{p',q'}^{-\beta,(t-s,1)} [\mathfrak{q}]\right)^{\frac{1}{q'}}\\
	&\lesssim \left(\frac{1}{(u-s)^{\frac{j}{\alpha}}(t-u)^{\frac{k}{\alpha}}}\right)\left[1+\frac{(t-s)^{\frac{\zeta}{\alpha}}}{(u-s)^{\frac{\zeta}{\alpha}}}+\frac{(t-s)^{\frac{\zeta}{\alpha}}}{(t-u)^{\frac{\zeta}{\alpha}}}\right]\left[\frac{1}{(t-u)^{\frac{d}{\alpha p}}} + \frac{1}{(t-s)^{\frac{d}{\alpha p}}}\right] \bar{p}_\alpha(t-s,y-x)(t-s)^{\frac{\beta}{\alpha}}.
\end{align*}
\end{paragraph}

\begin{paragraph}{Non-thermic part\\[0.5cm]}
	
	For the non-thermic part, let us write
	\begin{align*}
		\Vert \mathcal{F}(\phi)\star \nabla^j\bar{p}_\alpha (u-s,&x-\cdot)\nabla^k p_\alpha (t-u,y-\cdot)\Vert_{L^{p'}} \leq \Vert  \mathcal{F}(\phi)\Vert_{L^{1}} \Vert \nabla^j\bar{p}_\alpha (u-s,x-\cdot)\nabla^k p_\alpha (t-u,y-\cdot)\Vert_{L^{p'}}\\
		&\lesssim \frac{\bar{p}_\alpha (t-s,y-x)}{(u-s)^{\frac{j}{\alpha}}(t-u)^{\frac{k}{\alpha}}}\left[ \frac{1}{(t-u)^{\frac{d}{\alpha  p}}} +\frac{1}{(u-s)^{\frac{d}{\alpha  p}}}\right]\\
		& \lesssim \mathcal{T}_{p',q'}^{-\beta} [\mathfrak{q}].
	\end{align*}
\end{paragraph}
Which concludes the proof of the first item of Lemma \ref{big-lemma}.\\

In order to prove the second item (\ref{big-lemma-2}), we will bound cleverly $\left| \frac{\nabla p_\alpha (t-u,w-z)}{\bar{p}_{\alpha} (t-s,x-w)} - \frac{\nabla p_\alpha (t-u,y-z)}{\bar{p}_{\alpha} (t-s,x-y)} \right|$ and then use the previous result to conclude.\\
Notice that, in (\ref{big-lemma-2}), the time-space scale we are looking at is $(t-u)^{1/\alpha}$ compared to $|w-y|$, hence we will need to treat split the discussion into diagonal and off-diagonal cases at this local time scale. Let $z\in \R^d$.
\begin{itemize}
	\item If $|w-y|\geq (t-u)^{1/\alpha}$, as usual, we use a triangular inequality and the fact that $\frac{|w-y|^\zeta}{(t-u)^{\frac{\zeta}{\alpha}}} \geq 1$.
	
	\begin{align*}
		\left| \frac{\nabla p_\alpha (t-u,w-z)}{\bar{p}_{\alpha} (t-s,w-x)} - \right.&\left.\frac{\nabla p_\alpha (t-u,y-z)}{\bar{p}_{\alpha} (t-s,y-x)} \right| \\ &\lesssim \frac{|w-y|^\zeta}{(t-u)^{\frac{\zeta}{\alpha}}} \left[ \frac{|\nabla p_\alpha (t-u,w-z)|}{\bar{p}_\alpha (t-s,w-x)} + \frac{|\nabla  p_\alpha (t-u,y-z)|}{\bar{p}_\alpha(t-s,y-x)} \right].
	\end{align*}
	\item If $|w-y|\leq (t-u)^{1/\alpha}$, we use the Taylor of expansion of Lemma \ref{holder-noise}:
	\begin{align*}
		\left| \frac{\nabla p_\alpha (t-u,w-z)}{\bar{p}_{\alpha} (t-s,w-x)} \right. &\left. -\frac{\nabla p_\alpha (t-u,y-z)}{\bar{p}_{\alpha} (t-s,y-x)} \right|\\ & = \left| \frac{\nabla p_\alpha (t-u,w-z)}{\bar{p}_{\alpha} (t-s,w-x)} \pm  \frac{\nabla p_\alpha (t-u,w-z)}{\bar{p}_{\alpha} (t-s,y-x)}-\frac{\nabla p_\alpha (t-u,y-z)}{\bar{p}_{\alpha} (t-s,y-x)} \right| \\
		&\lesssim | \nabla p_\alpha (t-u,w-z) | \left|\frac{1}{\bar{p}_{\alpha} (t-s,w-x)} -  \frac{1}{\bar{p}_{\alpha} (t-s,y-x)}\right| \\&\qquad+ \frac{1}{\bar{p}_\alpha (t-s,y-x)} \left| \nabla p_\alpha (t-u,w-z)  - \nabla p_\alpha (t-u,y-z)\right| \\
		&\lesssim  \frac{\bar{p}_\alpha (t-u,w-z)}{(t-u)^{\frac{1}{\alpha}}}  \left|\frac{1}{\bar{p}_{\alpha} (t-s,w-x)} -  \frac{1}{\bar{p}_{\alpha} (t-s,y-x)}\right| \\&\qquad+ \frac{1}{\bar{p}_\alpha (t-s,y-x)} \left| \nabla p_\alpha (t-u,w-z)  - \nabla p_\alpha (t-u,y-z)\right| \\
		& \lesssim \frac{|w-y|^\zeta}{(t-u)^{\frac{\zeta + 1}{\alpha}}} \left[ \frac{\bar{p}_\alpha (t-u,w-z)}{\bar{p}_\alpha (t-s,w-x)}  +\frac{ \bar{p}_\alpha (t-u,y-z)  }{\bar{p}_\alpha (t-s,y-x)} \right].
	\end{align*}
	
\end{itemize}
In both cases,
\begin{align}
	\left| \frac{\nabla p_\alpha (t-u,w-z)}{\bar{p}_{\alpha} (t-s,x-w)} -\frac{\nabla p_\alpha (t-u,y-z)}{\bar{p}_{\alpha} (t-s,x-y)} \right| &\lesssim  \frac{|w-y|^\zeta}{(t-u)^{\frac{\zeta + 1}{\alpha}}} \left[ \frac{\bar{p}_\alpha (t-u,w-z)}{\bar{p}_\alpha (t-s,w-x)}  +\frac{ \bar{p}_\alpha (t-u,y-z)  }{\bar{p}_\alpha (t-s,y-x)} \right],
\end{align}
hence 
\begin{align*}
	&\left\Vert \bar{p}_{\alpha} (u-s,x-\cdot)\left[\frac{\nabla p_\alpha (t-u,w-\cdot)}{\bar{p}_{\alpha} (t-s,w-x)} -\frac{\nabla p_\alpha (t-u,y-\cdot)}{\bar{p}_{\alpha} (t-s,y-x)} \right] \right\Vert_{ \B_{p',q'}^{-\beta}}\\& \lesssim \frac{|w-y|^\zeta}{(t-u)^{\frac{\zeta + 1}{\alpha}}} \left[\left\Vert\bar{p}_{\alpha} (u-s,x-\cdot) \frac{\bar{p}_\alpha (t-u,w-\cdot)}{\bar{p}_\alpha (t-s,w-x)}\right\Vert_{ \B_{p',q'}^{-\beta}} +\left\Vert \bar{p}_{\alpha} (u-s,x-\cdot)\frac{ \bar{p}_\alpha (t-u,y-\cdot)  }{\bar{p}_\alpha (t-s,y-x)}\right\Vert_{ \B_{p',q'}^{-\beta}}\right].
\end{align*}
(\ref{big-lemma-2}) then follows from using (\ref{big-lemma-1}) twice.

%\begin{align*}
%	|\Delta_2& (s,t,x,w,y) |\\&\lesssim \frac{|w-y|^\rho}{\bar{p}_{\alpha} (s,t,x,y)} \int_s^t   \frac{\Vert h_{s,x}(u,\cdot) \Vert_{\B_{\infty,\infty}^\rho}}{(t-u)^{\frac{\rho +1}{\alpha}}}   \Vert b^m(u,\cdot) \Vert_{\B_{p,q}^\beta} \Vert  \bar{p}_{\alpha} (s,u,x,\cdot)    \bar{p}_{\alpha} (t,u,y,\cdot) \Vert_{\B_{p,q}^{-\beta}}  \d u  \\
%	& \qquad +   \frac{|w-y|^\rho}{\bar{p}_{\alpha} (s,t,x,w)} \int_s^t   \frac{\Vert h_{s,x}(u,\cdot) \Vert_{\B_{\infty,\infty}^\rho}}{(t-u)^{\frac{\rho +1}{\alpha}}}   \Vert b^m(u,\cdot) \Vert_{\B_{p,q}^\beta} \Vert  \bar{p}_{\alpha} (s,u,x,\cdot)  \nabla  \bar{p}_{\alpha} (t,u,w,\cdot) \Vert_{\B_{p,q}^{-\beta}}  \d u .
%\end{align*}
%We now use Lemma \ref{big-lemma} twice:
%\begin{align}
%	|\Delta_2 (s,t,x,w,y)| \lesssim |w-y|^\rho \int_s^t  \Vert h_{s,x}(u,\cdot) \Vert_{\B_{\infty,\infty}^\rho} \Vert b^m(u,\cdot) \Vert_{\B_{p,q}^\beta} \frac{\mathfrak{L}(u,s,t,\rho)}{(t-u)^{\frac{\rho}{\alpha}}}  \d u.
%\end{align}

\end{proof}

\begin{proof}[Proof of Lemma \ref{thermic-h} (Thermic part of $\Vert h_{s,x}(u,\cdot) \Vert_{\B^{\rho}_{\infty,\infty}}$)] \hspace{0.5cm}
	
\noindent Recall the definition of the thermic part (choosing $n=1$)
	$$\mathcal{T}_{\infty,\infty}^\rho [h_{s,x}(t,\cdot)] = \sup_{v\in (0,1]} v^{1-\frac{\rho}{\alpha}} \Vert \partial_v  p_\alpha (v,\cdot) \star h_{s,x}(t,\cdot) \Vert_{L^\infty}.$$
Let $y\in \R^d$. We use the cancellation argument 
$$\int \partial_v p_\alpha (v,y-w)h_{s,x}(t,y)\d w = 0$$
to write
\begin{align*}
| \partial_v  p_\alpha (v,y) \star h_{s,x}(t,y)|& = \left|\int \partial_v p_\alpha (v,y-w)h_{s,x}(t,w)\d w \right|\\
&=\left|\int \partial_v p_\alpha (v,y-w)(h_{s,x}(t,w)-h_{s,x}(t,y))\d w \right|.
\end{align*}

As in the proof of Lemma \ref{big-lemma}, we will treat the diagonal and off-diagonal cases separately. This time, those regimes are to be considered w.r.t. the time increment $(t-s)$ and the spatial increment $|y-w|$.

\begin{itemize}
\item \textbf{Off-diagonal case:} $|w-y|\geq (t-s)^{1/\alpha}$
\begin{align}\label{off-diag-h}
\mathfrak{P_1}&:= \left|\int \partial_v p_\alpha (v,y-w)(h_{s,x}(t,w)-h_{s,x}(t,y)) \mathbb{1}_{|w-y|\geq (t-s)^{\frac{1}{\alpha}}}\d w \right| \nonumber \\
& \lesssim \int | \partial_v p_\alpha (v,y-w)||h_{s,x}(t,w)-h_{s,x}(t,y)| \mathbb{1}_{|w-y|\geq (t-s)^{\frac{1}{\alpha}}}\d w \nonumber \\
 &\lesssim \int \frac{1}{v} \bar{p}_\alpha (v,y-w)\Vert h_{s,x}(t,\cdot) \Vert_{L^\infty} \frac{|w-y|^{\rho}}{(t-s)^{\frac{\rho}{\alpha}}}\d w \nonumber \\
 &\lesssim \frac{1}{v (t-s)^{\frac{\rho}{\alpha}}}\Vert h_{s,x}(t,\cdot) \Vert_{L^\infty} \int \bar{p}_\alpha (v,y-w) |y-w|^\rho \d w \nonumber \\
  &\lesssim \frac{v^{\frac{\rho}{\alpha}-1}}{(t-s)^{\frac{\rho}{\alpha}}}\Vert h_{s,x}(t,\cdot) \Vert_{L^\infty} \nonumber \\
  &\lesssim \frac{v^{\frac{\rho}{\alpha}-1}}{(t-s)^{\frac{\rho}{\alpha}}} \left(  1 +\int_s^t   \Vert h_{s,x}(u,\cdot) \Vert_{\B^{\rho}_{\infty,\infty}} \Vert b^m (u,\cdot) \Vert_{\B^{\beta}_{p,q}}  \mathfrak{L}(u,s,t,\rho)  \d u\right),
\end{align}
where the last inequality was obtained using (\ref{l-inf-bound}).
\item \textbf{Diagonal case:} $|w-y|\leq (t-s)^{1/\alpha}$. Let
\begin{align}
	\Delta_1 (s,t,x,w,y) &:=\frac{p_\alpha(t-s,w-x)}{\bar{p}_{\alpha}(t-s,w-x)}-\frac{p_\alpha(t-s,y-x)}{\bar{p}_{\alpha}(t-s,y-x)} \label{def-delta1}\\
	\Delta_2 (s,t,x,w,y) &:=\int_s^t \int \frac{p^m (u-s,z-x)}{\bar{p}_{\alpha} (u-s,z-x)} b^m (u,z) \bar{p}_{\alpha} (s,u,x,z) \nonumber \\ &\qquad \qquad \times \left[\frac{\nabla p_\alpha (t-u,w-z)}{\bar{p}_{\alpha} (t-s,w-x)} -\frac{\nabla p_\alpha (t-u,y-z)}{\bar{p}_{\alpha} (t-s,y-x)} \right]\d z \d u,\label{def-delta2}
\end{align}

so that $|h_{s,x}(t,w)-h_{s,x}(t,y)| = |	\Delta_1 (s,t,x,w,y)+	\Delta_2 (s,t,x,w,y)|$. Let us first bound $\Delta_1$:
\begin{align*}
	|\Delta_1 |&\leq \left| \frac{p_{\alpha}(t-s,w-x) - p_\alpha (t-s,y-x)}{\bar{p}_\alpha (t-s,w-x)} \right| + p_\alpha (t-s,y-x) \left| \frac{1}{\bar{p}_\alpha (t-s,w-x)} - \frac{1}{\bar{p}_\alpha (t-s,y-x)}\right|\\
	&\leq \left| \frac{p_{\alpha}(t-s,w-x) - p_\alpha (t-s,y-x)}{\bar{p}_\alpha (t-s,w-x)} \right| +  \left| \frac{\bar{p}_\alpha (t-s,y-x)-\bar{p}_\alpha (t-s,w-x)}{\bar{p}_\alpha (t-s,w-x)}\right|.
\end{align*}
We now use Lemma \ref{holder-noise} twice to obtain, $\forall \rho \in (0,1]$,
\begin{equation}\label{delta-1-maj}
	|\Delta_1 (s,t,x,w,y)|\lesssim  \frac{|w-y|^\rho}{(t-s)^{\frac{\rho}{\alpha}}}.
\end{equation}

Let us now treat $\Delta_2$. Using the same inequalities as in Section \ref{sec-estimates}, we can write
\begin{align*}
|\Delta_2 (s,t,x,w,y) |&:=\int_s^t  \Vert h_{s,x}(u,\cdot) \Vert_{ \B_{\infty,\infty}^\rho} \Vert b^m (u,\cdot)\Vert_{ \B_{p,q}^\beta}  \nonumber \\ &\qquad \qquad \times \left\Vert \bar{p}_{\alpha} (u-s,x-\cdot)\left[\frac{\nabla p_\alpha (t-u,w-\cdot)}{\bar{p}_{\alpha} (t-s,w-x)} -\frac{\nabla p_\alpha (t-u,y-\cdot)}{\bar{p}_{\alpha} (t-s,y-x)} \right] \right\Vert_{ \B_{p',q'}^{-\beta}}  \d u.
\end{align*}
Using now (\ref{big-lemma-2}), we obtain
\begin{align}\label{delta-2-maj}
	|\Delta_2 (s,t,x,w,y)| \lesssim |w-y|^\rho \int_s^t  \Vert h_{s,x}(u,\cdot) \Vert_{\B_{\infty,\infty}^\rho}& \Vert b^m(u,\cdot) \Vert_{\B_{p,q}^\beta}  \frac{\mathfrak{L}(u,s,t,\rho)}{(t-u)^{\frac{\rho}{\alpha}}}\d u.
\end{align}

We can now use the bounds from (\ref{delta-1-maj}) and (\ref{delta-2-maj}) to write
\begin{align}
	\mathfrak{P_2}&:= \left|\int \partial_v p_\alpha (v,y-w)(h_{s,x}(t,w)-h_{s,x}(t,y)) \mathbb{1}_{|w-y|< (t-s)^{\frac{1}{\alpha}}}\d w \right| \nonumber \\
	& \lesssim \nonumber \int | \partial_v p_\alpha (v,y-w)||\Delta_1 (s,t,x,w,y)| \mathbb{1}_{|w-y|< (t-s)^{\frac{1}{\alpha}}}\d w \\&  \qquad+  \int | \partial_v p_\alpha (v,y-w)||\Delta_2 (s,t,x,w,y)| \mathbb{1}_{|w-y|< (t-s)^{\frac{1}{\alpha}}}\d w \nonumber\\
	&=: \mathfrak{D}_1 + \mathfrak{D}_2.
\end{align}
%Going back to $| \partial_v  p_\alpha (v,y) \star h_{s,x}(t,y)|$, we get
%\begin{align*}
%	| \partial_v  p_\alpha (v,y) \star & h_{s,x}(t,y)| \lesssim \int | \partial_v p_\alpha (v,y-w)||h_{s,x}(t,w)-h_{s,x}(t,y)|\d w \\
%	&\lesssim \int | \partial_v p_\alpha (v,y-w)||\Delta_1 (s,t,x,w,y)|\d w +  \int | \partial_v p_\alpha (v,y-w)||\Delta_2 (s,t,x,w,y)|\d w \\
%	&=: \mathfrak{D}_1 + \mathfrak{D}_2.
%	%	&\lesssim \int | \partial_v p_\alpha (v,y-w)| \frac{|w-y|^\rho}{(t-s)^{\frac{\rho}{\alpha}}}\d w +  \int | \partial_v p_\alpha (v,y-w)||\Delta_2 (s,t,x,w,y)|\d w \\
%\end{align*}
Let us start with $\mathfrak{D}_1 $:
\begin{align}
	\mathfrak{D}_1 &\lesssim  \int | \partial_v p_\alpha (v,y-w)| \frac{|w-y|^\rho}{(t-s)^{\frac{\rho}{\alpha}}}\d w \nonumber \\
	&\lesssim \frac{v^{-1}}{(t-s)^{\frac{\rho}{\alpha}}}\int \bar{p}_\alpha (v,y-w) |w-y|^\rho \d w \nonumber  \\
	&\lesssim \frac{v^{\frac{\rho}{\alpha}-1}}{(t-s)^{\frac{\rho}{\alpha}}}.
\end{align}
Now, for $\mathfrak{D}_2$, we integrate in $w$ first:
\begin{align*}
	\mathfrak{D}_2 &= \int | \partial_v p_\alpha (v,y-w)||\Delta_2 (s,t,x,w,y)|\mathbb{1}_{|w-y|< (t-s)^{\frac{1}{\alpha}}}\d w \nonumber \\
	&\lesssim v^{-1} \int_s^t  \Vert h_{s,x}(u,\cdot) \Vert_{\B_{\infty,\infty}^\rho} \Vert b^m(u,\cdot) \Vert_{\B_{p,q}^\beta}  \frac{\mathfrak{L}(u,s,t,\rho)}{(t-u)^{\frac{\rho}{\alpha}}}  \int | \bar{p}_\alpha (v,y-w)||w-y|^\rho\d w \d u \nonumber \\
	&  \lesssim \frac{v^{\frac{\rho}{\alpha}-1}}{(t-s)^{\frac{\rho}{\alpha}}} \int_s^t  \Vert h_{s,x}(u,\cdot) \Vert_{\B_{\infty,\infty}^\rho} \Vert b^m(u,\cdot) \Vert_{\B_{p,q}^\beta}  \mathfrak{L}(u,s,t,\rho) \frac{(t-s)^{\frac{\rho}{\alpha}}}{(t-u)^{\frac{\rho}{\alpha}}}   \d u,
\end{align*}
hence, for the diagonal case $|w-y|\leq (t-s)^{1/\alpha}$, we obtain
\begin{align}\label{diag-h}
	\mathfrak{P}_2 \lesssim \frac{v^{\frac{\rho}{\alpha}-1}}{(t-s)^{\frac{\rho}{\alpha}}} \left(1 + \int_s^t  \Vert h_{s,x}(u,\cdot) \Vert_{\B_{\infty,\infty}^\rho} \Vert b^m(u,\cdot) \Vert_{\B_{p,q}^\beta}  \mathfrak{L}(u,s,t,\rho) \frac{(t-s)^{\frac{\rho}{\alpha}}}{(t-u)^{\frac{\rho}{\alpha}}}   \d u\right).
\end{align}
\end{itemize}
Combining (\ref{off-diag-h}) and (\ref{diag-h}), we get the overall bound
\begin{equation*}
		| \partial_v  p_\alpha (v,y) \star h_{s,x}(t,y)| \lesssim  \frac{v^{\frac{\rho}{\alpha}-1}}{(t-s)^{\frac{\rho}{\alpha}}} \left(1 + \int_s^t  \Vert h_{s,x}(u,\cdot) \Vert_{\B_{\infty,\infty}^\rho} \Vert b^m(u,\cdot) \Vert_{\B_{p,q}^\beta}  \mathfrak{L}(u,s,t,\rho) \left[\frac{(t-s)^{\frac{\rho}{\alpha}}}{(t-u)^{\frac{\rho}{\alpha}}} +1 \right]  \d u\right),
\end{equation*}
hence 
\begin{equation*}
	\mathcal{T}_{\infty,\infty}^\rho [h_{s,x}(t,\cdot)] \lesssim \frac{1}{(t-s)^{\frac{\rho}{\alpha}}} \left(1 + \int_s^t  \Vert h_{s,x}(u,\cdot) \Vert_{\B_{\infty,\infty}^\rho} \Vert b^m(u,\cdot) \Vert_{\B_{p,q}^\beta}  \mathfrak{L}(u,s,t,\rho) \left[\frac{(t-s)^{\frac{\rho}{\alpha}}}{(t-u)^{\frac{\rho}{\alpha}}} +1 \right]  \d u\right).
\end{equation*}
\end{proof}
 
 \large \textbf{Declarations} \\
 \normalsize

 \begin{paragraph}{Ethical approval}Not applicable\\
 \end{paragraph}
 
 \begin{paragraph}{Competing interests}The author has no competing interests to declare that are relevant to the content of this article\\
 \end{paragraph}
 \begin{paragraph}{Authors' contributions}Not applicable\\
 \end{paragraph}
 \begin{paragraph}{Funding} Research supported by the Universit\'e Paris Saclay\\
 \end{paragraph}
 \begin{paragraph}{Availability of data and materials}Not applicable\\
 \end{paragraph}
\bibliographystyle{alpha}
\bibliography{ar1}

\end{document}